\numberwithin{equation}{section}
\newtheorem{definition}{Definition}[section]
\newtheorem{corollary}{Corollary}[section]
\newtheorem{theorem}{Theorem}[section]
\newtheorem{lemma}{Lemma}[section]
\newtheorem{proposition}{Proposition}[section]
\newcommand{\be}{\begin{equation}}
\newcommand{\ee}{\end{equation}}
\begin{document}

\title{K\"ahler Finsler manifolds with curvatures bounded from below\footnote{Supported by the National Natural Science Foundation of China (No. 11871126)}}
\author{Bin Chen, Nan Li  and  Siwei Liu}

\maketitle

\begin{abstract}
We obtain a partial parallelism of the complex structure on K\"ahler Finsler manifolds. As applications, we prove Synge-Tsukamoto theorem and Bonnet-Myers theorem for positively curved K\"ahler Finsler manifolds. Moreover, we generalize a comparison theorem due to Ni-Zheng by introducing  the notion of orthogonal Ricci curvature to K\"ahler Finsler geometry.\\

  \noindent\textbf{Keywords.} K\"ahler Finsler metric, comparison theorem, Ricci curvature, holomorphic curvature.

  \noindent\textbf{MSC2010}: 53C60, 53B40.
\end{abstract}

\section{Introduction}

K\"ahler Finsler geometry, a natural generalization of K\"ahler geometry, was initiated by Abate-Patrizio in \cite{MG}, where the Kobayashi metric is shown to be weakly K\"ahler. Recently, there has been a surge of interest in K\"ahler Finsler geometry, especially in
its global and analytic aspects. We attend to study more global properties of K\"ahler Finsler manifolds.

The classical Synge's theorem gives simply connectedness of Riemannian manifolds with positive sectional curvature. Tsukamoto \cite{Tsu} proved a K\"ahler version under the assumption of positive holomorphic sectional curvature. The Finsler version of Synge's theorem was first derived by Auslander \cite{Au}, where the notion of flag curvature is adopted. Won introduced the notion of pseudo-K\"ahler Finsler metrics and proved a Synge type theorem. Adopting the K\"ahler notion of Abate and Patrizio, we obtain a Synge type theorem for weakly K\"ahler Finsler manifolds.

\begin{theorem}  Let $(M,G)$ be a strongly convex weakly K\"ahler Finsler manifold. Suppose it is complete and the holomorphic curvature $\mathbf{H}\geq \lambda>0$ is bounded below uniformly by a positive constant. Then $M$ is compact and simply connected.
\end{theorem}

The compactness result in the above theorem can be considered as a version of Myers's theorem. To deduce the Myers's theorem in real Finsler case, the positivity of Ricci curvature is assumed. For K\"ahler Finsler manifolds, Yin-Zhang \cite{YZ} verified a Myers's theorem for manifolds of positive bisectional curvature.
In classical K\"ahler geometry, by introducing  a notion  of curvature, namely the so-called
orthogonal Ricci curvature, Ni-Zheng \cite{NZ} proved a Myers type theorem and some comparison results. We generalize the notion of \textit{orthogonal Ricci curvature} (cf.\S6) to K\"ahler Finsler geometry and verify a Myers type theorem.
\begin{theorem} Let $(M,F)$ be a complete  strongly convex weakly K\"ahler Finsler manifold of complex dimension $n$. Suppose  the orthogonal Ricci curvature $\mathbf{Ric}^\perp\geq (2n-2)\lambda>0$, then the diameter of $M$ is at most $\pi/\sqrt{\lambda}$.
\end{theorem}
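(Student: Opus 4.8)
The plan is to run the classical Bonnet--Myers second-variation argument, but with the test variation fields confined to the $(2n-2)$ real directions orthogonal to the complex line spanned by the geodesic velocity. Fix $p,q\in M$ and let $\gamma:[0,L]\to M$ be a unit-speed minimizing geodesic from $p$ to $q$ with velocity $T=\dot\gamma$, so that $F(T)=1$, $D_TT=0$, and $L=d(p,q)$. Along $\gamma$ the Chern connection is metric-compatible with respect to the fundamental tensor $g_T$ (the Cartan-tensor correction terms drop out because the reference direction $T$ is itself parallel), so parallel transport preserves $g_T$-orthonormality. For a variation of $\gamma$ with fixed endpoints whose variation field $V$ is perpendicular to $T$, the second variation of arclength is the index form
\be
 I(V,V)=\int_0^L\Big[g_T(D_TV,D_TV)-g_T\big(R(V,T)T,\,V\big)\Big]\,dt ,
\ee
and, since $\gamma$ is minimizing, $I(V,V)\ge 0$ for every such $V$ vanishing at the endpoints.

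The crucial geometric input is the partial parallelism of the complex structure established earlier: along the geodesic $\gamma$ one has $D_T(JT)=J(D_TT)=0$, so $JT$ is parallel, and (using that $g_T$ is $J$-invariant, hence Hermitian) $g_T(JT,JT)=1$ and $g_T(T,JT)=0$. Thus $\{T,JT\}$ spans a parallel complex line. I would then choose a $g_T$-orthonormal frame $e_3,\dots,e_{2n}$ of its orthogonal complement, parallel along $\gamma$ and --- again invoking the partial parallelism --- arranged in $J$-pairs $e_{2k}=Je_{2k-1}$, so that the orthogonal complement is itself $J$-invariant and parallel. The point of the Kähler structure is precisely that such a $J$-adapted parallel frame exists, even though $J$ is not parallel in general Finsler geometry.

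Next I insert the test fields $V_i(t)=\sin(\pi t/L)\,e_i(t)$ for $i=3,\dots,2n$, which vanish at $t=0,L$. Since $e_i$ is parallel and $g_T$-unit, $D_TV_i=(\pi/L)\cos(\pi t/L)\,e_i$ and $g_T\big(R(V_i,T)T,V_i\big)=\sin^2(\pi t/L)\,K(T,e_i)$, where $K(T,e_i)=g_T\big(R(e_i,T)T,e_i\big)$ is the flag curvature of the flag $(T,e_i)$. Summing over $i$ and using $\int_0^L\cos^2(\pi t/L)\,dt=\int_0^L\sin^2(\pi t/L)\,dt=L/2$ gives
\be
 \sum_{i=3}^{2n} I(V_i,V_i)=\int_0^L\Big[(2n-2)\tfrac{\pi^2}{L^2}\cos^2(\tfrac{\pi t}{L})-\sin^2(\tfrac{\pi t}{L})\sum_{i=3}^{2n}K(T,e_i)\Big]\,dt .
\ee
Because $g_T$ is $J$-invariant and the frame is $J$-adapted, the sum of these $2n-2$ flag curvatures is exactly the orthogonal Ricci curvature, $\sum_{i=3}^{2n}K(T,e_i)=\mathbf{Ric}^\perp(T)$ --- this is how $\mathbf{Ric}^\perp$ is defined in \S6, namely the full Ricci curvature with the holomorphic term $K(T,JT)$ removed. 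With $\mathbf{Ric}^\perp\ge(2n-2)\lambda$ one obtains
\be
 \sum_{i=3}^{2n} I(V_i,V_i)\le (2n-2)\Big(\tfrac{\pi^2}{L^2}-\lambda\Big)\tfrac{L}{2}.
\ee
If $L>\pi/\sqrt{\lambda}$ the right-hand side is negative, forcing some $I(V_i,V_i)<0$ and contradicting the minimality of $\gamma$. Hence $L\le\pi/\sqrt{\lambda}$, and since $p,q$ were arbitrary the diameter bound follows.

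The main obstacle I anticipate is not the variational computation, which is routine once the frame is in place, but the construction and justification of the $J$-adapted parallel orthonormal frame together with the matching identity $\sum_i K(T,e_i)=\mathbf{Ric}^\perp(T)$. In Finsler geometry the covariant derivative and the curvature depend on the reference vector $T$, and $J$ is not parallel in general; it is exactly the weakly Kähler condition combined with the partial parallelism result that keeps $JT$ parallel and that lets the real flag curvatures in the directions orthogonal to $\{T,JT\}$ assemble into the complex orthogonal Ricci curvature. Verifying that the curvature symmetries of the weakly Kähler Chern connection make this real--complex dictionary exact is the delicate step.
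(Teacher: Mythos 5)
Your variational skeleton is exactly the paper's (sine-weighted test fields in the $2n-2$ directions $g_T$-orthogonal to $T$ and $JT$, summed index forms, Myers-type contradiction), but the step you yourself single out as the delicate one --- the construction of a \emph{parallel} $g_T$-orthonormal frame arranged in $J$-pairs $e_{2k}=Je_{2k-1}$ --- is a genuine gap: such a frame does not exist in general in this setting. Two things go wrong. First, $g_T$ is not $J$-invariant: Lemma \ref{Jinvariant} only gives the \emph{partial} invariance $g_T(JT,JX)=g_T(T,X)$, with the reference vector $T$ occupying one slot; by the rigidity proposition in \S\ref{Sec2}, full invariance $g_T(JX,JY)=g_T(X,Y)$ for all $X,Y$ forces the Cartan torsion to vanish, i.e. the metric is Riemannian--Hermitian. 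So for a genuinely Finslerian metric, $Je$ need not be $g_T$-unit when $e$ is, and $g_T$-orthonormal $J$-pairs need not exist. (Your parenthetical ``using that $g_T$ is $J$-invariant, hence Hermitian'' fails for the same reason; the two facts you actually use there, $g_T(JT,JT)=1$ and $g_T(T,JT)=0$, do hold, but they follow from the partial invariance.) Second, the partial parallelism of Lemma \ref{lemJ} says only that $\nabla_{y^{\mathcal{H}}}(Jy)=0$, i.e. $J$ applied to the geodesic velocity is parallel; it does not say that $J$ commutes with $D^T_T$ on arbitrary fields, so even a $J$-paired frame chosen at one endpoint would lose its pairing under parallel transport.

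Fortunately the step is also unnecessary. The identity you want, $\sum_{i}\mathbf{K}(T,e_i)=\mathbf{Ric}^\perp(T)$, requires no $J$-adapted frame: the paper defines the Ricci curvature as the trace $\mathbf{Ric}(T)=\sum_{j=1}^{2n-1}\mathbf{K}(T,e_j)$ over \emph{any} $g_T$-orthonormal frame with $e_{2n}=T$, and this trace is frame-independent. So take any $g_T$-orthonormal frame with $e_{2n}=T$ and $e_{2n-1}=JT$ (possible by Lemma \ref{Jinvariant}); then
\be
\sum_{i=1}^{2n-2}\mathbf{K}(T,e_i)=\mathbf{Ric}(T)-\mathbf{K}(T,JT)=\mathbf{Ric}^\perp(T)
\ee
is immediate from the definition of $\mathbf{Ric}^\perp$. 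Parallel transport with reference vector $T$ preserves $g_T$-orthonormality, and Lemma \ref{lemJ} guarantees precisely that the slot $e_{2n-1}(t)=JT(t)$ persists along the whole geodesic --- this is the only place the weakly K\"ahler hypothesis enters. With the $J$-pairing deleted and replaced by this trace argument, your proof coincides with the paper's.
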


The comparison technique is widely used in Riemannian geometry.
In the real Finsler setting, Shen \cite{Shen1} first extended comparison theorems to
Finsler geometry. Later, Wu-Xin \cite{WX} proved Hessian and  Laplacian
comparison theorems under various curvature conditions. For more generalizations,
we can refer to \cite{Shen2,Oh} and references therein. In the complex Finsler realm,  by defining the Hodge Laplacian, Zhong-Zhong \cite{ZZ} and Xiao-Zhong-Qiu \cite{XZQ} obtained some
vanishing theorems and Laplacian comparison theorems on the tangent bundle. Yin-Zhang \cite{YZ} discussed the comparison theorems for the nonlinear complex Hessian. Later, Li-Qiu \cite{LQ} applied the comparison theorem to show certain K\"ahler Finsler manifold is Stein. One of the central estimates in these comparison theorems is the Laplacian comparison. The present paper  continues
investigations in this direction. We introduce the orthogonal Laplacian and  derive the following result.

\begin{theorem}Let $(M,G)$ be a complete strongly convex K\"ahler Finsler manifold of complex dimension $n$. We have the followings whenever the distance function $r$ is smooth:
\begin{itemize}
  \item[(i)] If the orthogonal Ricci curvature $\mathbf{Ric}^\perp\geq (2n-2)\lambda$, then  $$\Box^\perp r\leq (2n-2)ct_\lambda (r).$$
The precise definition of $\Box^\perp r$ and $ct_\lambda$ can be found in \S7.
  \item[(ii)] If the holomorphic curvature $\mathbf{H}\geq 4\lambda$, then   $$H(r)(J(\nabla r),J(\nabla r))\leq 2 ct_\lambda(2r)$$
      where $H(r)$ is the Hessian of $r$ and $J$ is the complex structure.
\end{itemize}
\end{theorem}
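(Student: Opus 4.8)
The plan is to reduce both statements to scalar Riccati comparison inequalities along a minimizing geodesic, exploiting the partial parallelism of $J$ to decouple the $J(\nabla r)$-direction from its orthogonal complement. Fix a point $x$ where $r$ is smooth and let $\gamma\colon[0,r]\to M$ be the unit-speed minimizing geodesic with $\gamma'(r)=\nabla r(x)=:T$. Working with the Chern connection with reference vector $T$, which is parallel along $\gamma$, the situation becomes Riemannian-like on the $g_T$-orthogonal complement $T^\perp$: the restriction $S:=H(r)|_{T^\perp}$ is the shape operator of the geodesic sphere and satisfies the matrix Riccati equation $S'+S^2+\mathcal R=0$, where $\mathcal R(\cdot)=R(\cdot,T)T$ is the flag-curvature operator and $'$ denotes covariant differentiation along $\gamma$. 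I would take this Jacobi/Riccati equation from the comparison machinery recalled earlier (cf.\ Shen \cite{Shen1}, Wu-Xin \cite{WX}). The decisive structural input is the $g_T$-orthogonal splitting $T^\perp=\mathbb{R}\,JT\oplus E^\perp$ with $E^\perp:=\{T,JT\}^\perp$ of real dimension $2n-2$; the partial parallelism of the complex structure guarantees that $JT$ is parallel along $\gamma$ and that $E^\perp$ is invariant under both $J$ and parallel transport, so this decomposition is preserved along the whole geodesic and the Riccati equation may be consistently traced over either summand.

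For (ii) I would project the Riccati equation onto the single parallel direction $JT$. Writing $u(t):=\langle S(JT),JT\rangle=H(r)(JT,JT)$ and using that $JT$ is parallel, one obtains $u'+|S(JT)|^2+R(JT,T,T,JT)=0$. Cauchy-Schwarz gives $|S(JT)|^2\geq u^2$, while $R(JT,T,T,JT)$ is exactly the holomorphic curvature term, so the hypothesis $\mathbf H\geq 4\lambda$ yields $u'+u^2+4\lambda\leq 0$. Comparing with the model solution of $\phi'+\phi^2+4\lambda=0$, namely $\phi(t)=2\,ct_\lambda(2t)$, and matching the asymptotics $u(t)\sim 1/t$ as $t\to 0^+$, the Riccati comparison lemma gives $u(r)\leq 2\,ct_\lambda(2r)$.

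For (i) I would instead trace the Riccati equation over $E^\perp$ in a parallel orthonormal frame. Setting $h(t):=\Box^\perp r$, which by its definition in \S7 is the $g_T$-trace of $H(r)$ restricted to $E^\perp$, the trace inequality gives $\mathrm{tr}_{E^\perp}S^2\geq h^2/(2n-2)$, while the trace of $\mathcal R$ over $E^\perp$ is precisely $\mathbf{Ric}^\perp(T,T)$: indeed the full Ricci curvature splits as $\mathbf{Ric}(T,T)=R(JT,T,T,JT)+\mathrm{tr}_{E^\perp}\mathcal R$, and removing the holomorphic term is exactly the definition of the orthogonal Ricci curvature from \S6. Hence $h'+h^2/(2n-2)+(2n-2)\lambda\leq 0$; dividing by $2n-2$ and comparing with $\phi'+\phi^2+\lambda=0$, whose solution is $ct_\lambda$, yields $\Box^\perp r\leq (2n-2)\,ct_\lambda(r)$.

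The main obstacle I anticipate is not the ODE comparison but the justification of the Riccati equation and of the clean splitting in the genuinely Finslerian setting: in general the horizontal curvature tensor lacks the symmetries needed to isolate the holomorphic term $R(JT,T,T,JT)$ and to keep $JT$ orthogonal and parallel relative to $E^\perp$ along $\gamma$. This is precisely where the weakly K\"ahler condition and the partial parallelism of $J$ are indispensable, and I would need to verify carefully that the curvature operator along $\gamma$ respects the $\mathbb{R}\,JT\oplus E^\perp$ decomposition, so that both the single-direction and the traced Riccati inequalities are valid.
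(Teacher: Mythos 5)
Your proposal is sound, but it takes a genuinely different route from the paper, most visibly in part (ii). For part (i) the paper argues via the second variation: it takes Jacobi fields $J_i$ with $J_i(0)=0$ and $J_i(r)=E_i$, invokes Wu--Xin's identity $H(r)(E_i,E_i)=I_\gamma(J_i,J_i)$, and applies the index lemma against the scaled parallel fields $\frac{s_\lambda(t)}{s_\lambda(r)}E_i(t)$; your traced Riccati inequality is the classical ODE counterpart of this argument and yields the same bound, with Lemma \ref{lemJ} playing the identical role (keeping $T$ and $JT$ parallel so that the frame stays adapted to $\{T,JT\}^{\perp}$). For part (ii), however, the paper does something you do not: it first proves that $H(r)(J\nabla r,J\nabla r)=-4r_{11}$, where $r_{11}$ is the complex Hessian of Yin--Zhang, and then quotes from the proof of Theorem 4.4 of \cite{YZ} the differential inequality $4f^2+f'\leq-\tfrac14\mathbf{H}(T_o)$, a step which uses the K\"ahler (not merely weakly K\"ahler) hypothesis. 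Your alternative --- projecting the real Riccati equation onto the parallel direction $JT$, using Cauchy--Schwarz $|S(JT)|^2\geq u^2$ and the identity $\mathbf{K}(T,JT)=\mathbf{H}(T_o)$ of Theorem \ref{holo} --- stays entirely in the real category, avoids both the complex Hessian computation and the appeal to \cite{YZ}, and, if fully justified, would prove (ii) under the weaker weakly K\"ahler assumption, which would be a genuine improvement. What the paper's route buys is economy of prerequisites: every tool it uses is already assembled in the text, whereas your route imports the matrix Riccati equation $S'+S^2+R_T=0$ for the Finslerian shape operator with reference vector $\nabla r$; that equation is standard but is not really in Wu--Xin \cite{WX}, whose comparison theorems are index-form based --- cite Shen \cite{Shen2} or Ohta \cite{Oh} for it, and note that it requires the $g_T$-self-adjointness of $S$ and of the flag-curvature operator $R_T$, both of which do hold.

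One correction: the obstacle you flag in your last paragraph is not actually there. Neither the projection in (ii) nor the trace in (i) needs the curvature operator or $S$ to preserve the splitting $\mathbb{R}\,JT\oplus E^\perp$. All that is used is: $T$ and $JT$ parallel along $\gamma$ (Lemma \ref{lemJ}); metric compatibility of $D^T$ along the geodesic, so that parallel transport preserves $g_T$-orthonormality; self-adjointness of $S$ and $R_T$; and the frame-independence of the Ricci trace, which gives $\mathrm{tr}_{E^\perp}R_T=\mathbf{Ric}(T)-\mathbf{K}(T,JT)=\mathbf{Ric}^\perp(T)$. Any off-block components of $S$ only increase $\mathrm{tr}_{E^\perp}(S^2)$ and $|S(JT)|^2$, so Cauchy--Schwarz absorbs them with the correct sign. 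Also beware that $g_T$ is not $J$-invariant for a genuinely Finslerian metric --- full $J$-invariance would force the metric to be Hermitian by the paper's rigidity proposition in \S2 --- so a claim like ``$E^\perp$ is invariant under $J$'' needs the partial invariance of Lemma \ref{Jinvariant} (from $g_T(JT,JX)=g_T(T,X)$ one gets $g_T(T,JX)=-g_T(JT,X)$, which suffices); but in fact your argument never uses $J$-invariance of $E^\perp$, only its invariance under parallel transport.
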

The above results provide a Finsler generalization of the results of  Liu \cite{Liu} and Ni-Zheng \cite{NZ}.  If both assumptions  $\mathbf{Ric}^\perp\geq (2n-2)\lambda$ and $\mathbf{H}\geq 4\lambda$ are satisfied, the above theorem implies a
volume comparison and a eigenvalue comparison.

\begin{corollary}
Let $(M,G,\mu)$ be a strongly convex K\"ahler Finsler $n$-manifold with
vanishing Shen curvature.  Assume $\mathbf{Ric}^\perp\geq (2n-2)K$ and $\mathbf{H}\geq 4K$ where $K$ is either $+1,0$ or $-1$.
\begin{itemize}
  \item[(i)] For $0\leq r\leq R$, it holds
$$\frac{\mathrm{Vol}^\mu_G(B_p(R))}{\mathrm{Vol}^\mu_G(B_p(r))}\leq \frac{V_K(R)}{V_K(r)}$$
where $B_p(r)$ is the geodesic ball in $M$ centered at $p$ with radius $r$, and $V_K(r)$ is the volume of the geodesic ball of radius $r$ in the complex space form.
  \item[(ii)] The first Dirichlet eigenvalue of the geodesic ball of radius $r$ centered at $p$ is bounded above by
$$\lambda_1(B_p(r))\leq \lambda_1(B(r,K))$$  where $\lambda_1(B(r,K))$ is the first Dirichlet eigenvalue of the geodesic ball of radius $r$ on
the complex space form.
\end{itemize}
\end{corollary}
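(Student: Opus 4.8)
The plan is to deduce both assertions from Theorem 1.3 by first assembling the two one-sided estimates there into a comparison for the full trace of the Hessian of the distance function. Write $r=d(p,\cdot)$, which is smooth on $M\setminus(\mathrm{Cut}(p)\cup\{p\})$, and recall that a distance function satisfies $|\nabla r|=1$, so that $H(r)(\nabla r,\nabla r)=0$. Tracing the Hessian over a $\mu$-orthonormal frame adapted to the splitting $TM=\mathbb{R}\nabla r\oplus\mathbb{R}J(\nabla r)\oplus W$, where $W$ is the $(2n-2)$-dimensional complex-orthogonal complement, gives
\begin{equation*}
\mathrm{tr}\,H(r) = \Box^\perp r + H(r)\big(J(\nabla r),J(\nabla r)\big).
\end{equation*}
With $\lambda=K$, Theorem 1.3(i) bounds the first summand by $(2n-2)\,ct_K(r)$ and Theorem 1.3(ii) bounds the second by $2\,ct_K(2r)$, whence
\begin{equation*}
\mathrm{tr}\,H(r)\ \le\ (2n-2)\,ct_K(r)+2\,ct_K(2r)\ =:\ m_K(r).
\end{equation*}
By the choice of normalizations $\mathbf{Ric}^\perp\ge(2n-2)K$ and $\mathbf{H}\ge 4K$, the quantity $m_K(r)$ is exactly the radial Laplacian of the distance function on the complex space form $M_K$ of constant holomorphic curvature $4K$; since $ct_{4K}(r)=2\,ct_K(2r)$, one has $m_K=\partial_r\log\sigma_K$, where $\sigma_K(r)$ is the polar volume density of $M_K$.

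Next I would convert this Hessian-trace bound into a bound on the logarithmic derivative of the volume density of $(M,G,\mu)$. Here the hypothesis of vanishing Shen curvature is decisive: it removes the distortion (S-curvature) correction that in general separates $\mathrm{tr}\,H(r)$ from the weighted Laplacian, so that along each unit-speed geodesic $\gamma_\theta$ from $p$ the polar density $\sigma_\mu(r,\theta)$ obeys
\begin{equation*}
\frac{\partial}{\partial r}\log\sigma_\mu(r,\theta)=\mathrm{tr}\,H(r)\le m_K(r)=\frac{\partial}{\partial r}\log\sigma_K(r).
\end{equation*}
Consequently $r\mapsto \sigma_\mu(r,\theta)/\sigma_K(r)$ is non-increasing up to the cut distance $c(\theta)$. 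Integrating over the unit sphere at $p$ and setting $\sigma_\mu\equiv 0$ beyond $c(\theta)$ (legitimate because $\mathrm{Cut}(p)$ has $\mu$-measure zero), the elementary monotonicity lemma for ratios of integrals of ordered densities yields that $\mathrm{Vol}^\mu_G(B_p(r))/V_K(r)$ is non-increasing in $r$. This is precisely the Bishop--Gromov inequality (i).

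For part (ii) I would run Cheng's comparison. Let $\phi_K$ be the first Dirichlet eigenfunction of the model ball $B(r,K)\subset M_K$; it is radial, positive, and decreasing, with eigenvalue $\lambda_1(B(r,K))$. Using $u=\phi_K\circ r$ as a test function on $B_p(r)$ and the variational characterization $\lambda_1(B_p(r))\le \int_{B_p(r)}|\nabla u|^2\,d\mu\big/\int_{B_p(r)}u^2\,d\mu$, I would rewrite both integrals through the co-area formula in terms of $\sigma_\mu$. The Laplacian comparison $\mathrm{tr}\,H(r)\le m_K(r)$ together with the monotonicity of $\phi_K$ forces the Rayleigh quotient of $u$ over $M$ to be dominated by that of $\phi_K$ over $M_K$, which equals $\lambda_1(B(r,K))$; this gives $\lambda_1(B_p(r))\le\lambda_1(B(r,K))$.

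I expect the main obstacle to lie in adapting these classical integral arguments to the Finsler category, where the weighted Laplacian is nonlinear. The point that makes the arguments go through is that, on the smooth locus away from $\mathrm{Cut}(p)$, the relevant operator linearizes along the fixed reference vector $\nabla r$, so the polar-coordinate and co-area computations proceed as in the Riemannian case. Two steps require genuine care: first, verifying that vanishing Shen curvature indeed identifies $\mathrm{tr}\,H(r)$ with $\partial_r\log\sigma_\mu$ with no residual term; and second, justifying that the pointwise Laplacian comparison, valid on the smooth locus, persists in the barrier (distributional) sense across $\mathrm{Cut}(p)$, so that the global integrations over $B_p(r)$ are legitimate.
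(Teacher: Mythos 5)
Your proposal is correct and follows essentially the same route as the paper: you combine the two estimates of Theorem 1.3 (together with $H(r)(\nabla r,\nabla r)=0$) into the full Laplacian bound $\Box r\leq (2n-2)ct_K(r)+2ct_K(2r)$, which is exactly the paper's Corollary 7.1, and then run the standard Bishop--Gromov and Cheng arguments using the vanishing of the Shen (S-)curvature to identify $\Box r$ with $\partial_r\log\sigma_\mu$ --- precisely the argument the paper delegates to Section 5 of Yin--Zhang \cite{YZ}. The only points to tidy are that the trace should be taken with respect to a $g_{\nabla r}$-orthonormal frame (not a ``$\mu$-orthonormal'' one), and that for a non-reversible metric the radial test function in Cheng's argument should be taken as $-\phi_K\circ r$ (or handled via the reverse metric) so that $F^*(du)=|\phi_K'(r)|$ holds exactly.
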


The paper is organized as follows. In \S2 and \S3, some fundamental notions of complex Finsler metrics are introduced. In \S4, we verify the partial parallelism of the complex structure. In \S5 and \S6, a Synge type theorem and a Myers type theorem are proved. In \S7, the Laplacian comparison is investigated.

\section{Strongly convex complex Finsler metrics}\label{Sec2}

Let $(M,J)$ be an $n$-dimensional complex  manifold with the complex structure $J$. Suppose  $\{ z^{\alpha} \}^n_{\alpha=1}$ be a set of local complex coordinates with $z^{\alpha} = x^{\alpha} + \sqrt{-1} x^{\alpha+n}$, then $\{ x^{\alpha}, x^{\alpha+n} \}^n_{\alpha=1}$ forms a local real coordinate system.
In this coordinate system, the complex structure  has the form
\begin{equation}
J = J^i_k dx^k \otimes \frac{\partial}{\partial x^i}
\end{equation}
where
\begin{equation}\label{1eq001}
J^i_k = \left\{\begin{array}{ll}\delta^i_{k+n}, &  1\leq k \leq n,\\
 -\delta^i_{k-n}, &  n+1\leq k\leq 2n.\end{array}\right.
\end{equation}
Unless stated otherwise, we always assume that lowercase Greek letters run from $1$ to $n$ and lowercase Latin letters run from $1$ to $2n$, and the Einstein summation convention is assumed throughout this paper.

Let $T_{\mathbb{R}}M$ and  $T^{1,0} M$  be the real tangent bundle and holomorphic tangent bundle of $M$ respectively. A vector in  $T_{\mathbb{R}}M$ is denoted by $y=y^i\partial/\partial x^i$, while a vector in $T^{1,0} M$ is denoted by $v=v^\alpha \partial / \partial z^{\alpha}$.
As well known, the bundles $T^{1, 0}M$ and $T_{\mathbb{R}}M$ are isomorphic according to the bundle map
$$^o: T^{1, 0}M \rightarrow T_{\mathbb{R}}M,\hskip1cm v^o = v + \bar{v}
$$
and the inverse map
$$_o : T_{\mathbb{R}}M \rightarrow T^{1,0}M,\hskip1cm y_o = \frac12\left(y - \sqrt{-1}J y\right).
$$
Customarily, we denote $v^o = y$, $y_o = v$, and have the relation
\begin{equation}\label{2eq001}v^{\alpha} = y^{\alpha} + \sqrt{-1}y^{\alpha+n}\end{equation}

By the isomorphism, the split tangent bundles $T^{1, 0}M\backslash \{0\}$ and $T_{\mathbb{R}}M\backslash \{0\}$ can be considered as the same manifold, denoted by $\tilde M$. On the split bundle $\tilde M$, $\{z^{\alpha}, v^{\beta}\}$ is the induced complex coordinates while  $\{x^i, y^k \}$ is the local real coordinates. Thus, the complex tangent frame of  $\tilde M$ can be given as

\begin{equation*}\begin{array}{cc}
\partial_{\alpha}:= \frac{\partial}{\partial z^{\alpha}}
= \frac12\left(\frac{\partial}{\partial x^{\alpha}} - \sqrt{-1}\frac{\partial}{\partial x^{\alpha+n}}\right),
&\partial_{\bar{\alpha}}:= \frac{\partial}{\partial z^{\bar{\alpha}}}
= \frac12\left(\frac{\partial}{\partial x^{\alpha}} + \sqrt{-1}\frac{\partial}{\partial x^{\alpha+n}}\right),\\
\dot{\partial}_{\alpha}:=\frac{\partial}{\partial v^{\alpha}}
= \frac12\left(\frac{\partial}{\partial y^{\alpha}} - \sqrt{-1}\frac{\partial}{\partial y^{\alpha+n}}\right),
& \dot{\partial}_{\bar{\alpha}}:=\frac{\partial}{\partial v^{\bar{\alpha}}} = \frac12\left(\frac{\partial}{\partial y^{\alpha}} + \sqrt{-1}\frac{\partial}{\partial y^{\alpha+n}}\right).
\end{array}\end{equation*}

\begin{definition}[\cite{MG}]
A complex strongly pseudoconvex Finsler metric $F$ on a complex manifold $M$ is a continuous function
$F: T^{1, 0}M \rightarrow \mathbb{R}_{\geq 0}$ satisfying:

(i) $G = F^2$ is smooth on $\tilde M$;

(ii) $F(v) > 0$ for all $v \in \tilde{M}$;

(iii) $F(\zeta v) = |\zeta| F(v)$ for all $v \in T^{1,0}M$ and $\zeta \in \mathbb{C}$;

(iv) the Levi matrix $\left(G_{\alpha\bar{\beta}}\right)$ is positive-definite on $\tilde{M}$, where
\begin{equation}
\left(G_{\alpha\bar{\beta}}\right) = \left(\frac{\partial^2G}{\partial v^{\alpha} \partial v^{\bar{\beta}}}\right).
\end{equation}

\end{definition}

\begin{definition}[\cite{BCS}]
A real Finsler metric on a manifold $M$ is a function $F^o: T_{\mathbb{R}}M \rightarrow \mathbb{R}^+$ that satisfies the following properties:

(i) $G^o = F^{o2}$ is smooth on $\tilde{M}$;

(ii) $F^o(y) > 0$ for all $y\in \tilde{M}$;

(iii) $F^o(\lambda y) = \lambda F^o(y)$ for all $y \in T_{\mathbb{R}}M$ and $\lambda \in \mathbb{R}^+$;

(iv) the matrix $\left(g_{ij}\right)$ is positive-definite on $\tilde{M}$, where
\begin{equation}
\left(g_{ij}\right) = \frac12\left(\frac{\partial^2 G^o}{\partial y^i \partial y^j}\right).
\end{equation}
\end{definition}

Abate and Patrizio introduce the notion of \textit{strongly convex complex Finsler metrics}.
\begin{definition}[\cite{MG}]
Let $F: T^{1, 0}M \rightarrow \mathbb{R}^+$ be a  strongly pseudoconvex complex Finsler metirc. We say $F$ is strongly convex if the associated function
$F^o(y):=F(y_o)$  is a real Finsler metric.
\end{definition}

Through out this paper, we always assume $G=F^2$ is a strongly convex complex Finsler metric. For convenience,  we use the same symbol $F$ (resp. $G$) to denote the associated real Finsler metirc $F^o$ (resp. $G^o$). In other words, we shall consider $F$ or $G$ as a function of $\{z^\alpha,v^\alpha\}$ and also a function of $\{x^i,y^i\}$.\\

To start working, we need a few notations.
In complex case, we shall denote by indices like $\alpha, \bar{\beta}$ and so on the derivatives with respect to the $v$-coordinates.
The derivatives with respect to the $z$-coordinates will be denoted by indices after a semicolon.
In the real case, the notation is similar by adopting the Latin letters.  For instance, some derivatives of $G$ are denoted as follows
\begin{equation*}\begin{array}{lll}
 G_{\alpha\bar{\beta}} = \frac{\partial^2G}{\partial v^{\alpha} \partial v^{\bar{\beta}}},
&  G_{;\mu\bar{\nu}} = \frac{\partial^2 G}{\partial z^{\mu} \partial z^{\bar{\nu}}} ,
& G_{\alpha; \mu}=  \frac{\partial ^2 G}{\partial v^{\alpha} \partial z^{\mu}}; \\
G_{ij} = \frac{\partial^2G}{\partial y^i \partial y^j},
& G_{; kh} = \frac{\partial^2 G}{\partial x^k \partial x^h},
& G_{i; k} = \frac{\partial^2 G}{\partial y^i \partial x^k}.\\
\end{array}\end{equation*}


From now on, let us denote $u=Jy$. Locally, we have
\begin{equation}\label{eq2}
u=u^i \frac{\partial}{\partial x^i}= J^i_k y^k \frac{\partial}{\partial x^i}.
\end{equation}
We may state the complex Euler Theorem in the real coordinates.

\begin{lemma}\label{Lem01}
Let $H(v)$ be a complex-valued fuction on $T^{1,0}M(\cong T_{\mathbb{R}}M)$. Set
\begin{equation}
H(v) = H(y_o) = R(y) + \sqrt{-1} I(y),
\end{equation}
where $R(y)$ and $I(y)$ are real-valued functions. Then the following four conditions  are equivalent.

(i) $H(v)$ is of $(p,q)$-homogeneous;

(ii) $H(\zeta v)=\zeta^p\bar\zeta^qH(v),\forall\zeta\in \mathbb{C}$;

(iii)$H_{\alpha}v^{\alpha} = pH, H_{\bar{\alpha}}\bar{v}^{\alpha} = qH$;

(iv) $R_{k}y^k = (p+q)R,   I_{k}y^k = (p+q) I, R_{k}u^k = (q-p) I,    I_{k}u^k = (p-q)R.$

\noindent Here we adopt the abbreviations $H_\alpha=\partial H/\partial v^\alpha$ and $I_k=\partial I/\partial y^k$ and etc.
\end{lemma}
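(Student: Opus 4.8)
The plan is to establish the four-way equivalence by treating $(\mathrm{i})\Leftrightarrow(\mathrm{ii})$ as the definition, $(\mathrm{ii})\Leftrightarrow(\mathrm{iii})$ as a complex Euler theorem, and $(\mathrm{iii})\Leftrightarrow(\mathrm{iv})$ as a dictionary between the Wirtinger and the real derivatives. The equivalence $(\mathrm{i})\Leftrightarrow(\mathrm{ii})$ is immediate, since condition $(\mathrm{ii})$ is precisely what it means for $H$ to be $(p,q)$-homogeneous.

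For $(\mathrm{ii})\Rightarrow(\mathrm{iii})$ I would regard $\zeta$ and $\bar\zeta$ as independent and differentiate the identity $H(\zeta v)=\zeta^p\bar\zeta^q H(v)$. Because $(\zeta v)^\alpha=\zeta v^\alpha$, the chain rule gives $\partial_\zeta H(\zeta v)=H_\alpha(\zeta v)v^\alpha$ and $\partial_{\bar\zeta}H(\zeta v)=H_{\bar\alpha}(\zeta v)\bar v^\alpha$; comparing with the derivatives of the right-hand side and setting $\zeta=1$ yields exactly $H_\alpha v^\alpha=pH$ and $H_{\bar\alpha}\bar v^\alpha=qH$. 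For the converse $(\mathrm{iii})\Rightarrow(\mathrm{ii})$, set $f(\zeta)=H(\zeta v)$ and apply $(\mathrm{iii})$ at the point $\zeta v$ to obtain the first-order system $\zeta\partial_\zeta f=pf$ and $\bar\zeta\partial_{\bar\zeta}f=qf$. Writing $\zeta=e^{s+\sqrt{-1}\theta}$ converts this into $\partial_s f=(p+q)f$ and $\partial_\theta f=\sqrt{-1}(p-q)f$, which integrate to $f=|\zeta|^{p+q}e^{\sqrt{-1}(p-q)\theta}H(v)=\zeta^p\bar\zeta^q H(v)$, recovering $(\mathrm{ii})$.

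The substance of the lemma is $(\mathrm{iii})\Leftrightarrow(\mathrm{iv})$. Using the frame $\dot\partial_\alpha,\dot\partial_{\bar\alpha}$ I would first expand $H_\alpha$ and $H_{\bar\alpha}$ in terms of $R_k,I_k$, then insert $v^\alpha=y^\alpha+\sqrt{-1}y^{\alpha+n}$ from \eqref{2eq001} together with the identities $u^\alpha=-y^{\alpha+n}$ and $u^{\alpha+n}=y^\alpha$, which follow from $u=Jy$ in \eqref{eq2} and the form \eqref{1eq001} of $J$. Separating $H_\alpha v^\alpha=pH$ and $H_{\bar\alpha}\bar v^\alpha=qH$ into real and imaginary parts produces four real scalar identities, and the four equations of $(\mathrm{iv})$ are recovered as their pairwise sums and differences; for instance, adding the real parts of the two equations of $(\mathrm{iii})$ gives $R_k y^k=(p+q)R$, while subtracting them gives $I_k u^k=(p-q)R$. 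Since these recombinations are invertible, $(\mathrm{iv})$ also implies $(\mathrm{iii})$.

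I expect the only real obstacle to be the bookkeeping in this last step: one must keep careful track of the index shift $\alpha\mapsto\alpha+n$ encoded in $J$ and organize the real and imaginary parts so that the recombination into $(\mathrm{iv})$ is transparent. Beyond this linear-algebraic translation no conceptual difficulty arises.
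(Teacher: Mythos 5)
Your proposal is correct and, on the substantive step $(\mathrm{iii})\Leftrightarrow(\mathrm{iv})$, follows essentially the same route as the paper: both express the contractions $H_\alpha v^\alpha$ and $H_{\bar\alpha}\bar v^\alpha$ through the real derivatives (the paper packages this as the identity $v^\alpha\dot\partial_\alpha=\tfrac12\left(y^i\partial/\partial y^i-\sqrt{-1}\,u^k\partial/\partial y^k\right)$, which is exactly your Wirtinger-frame expansion combined with $u^\alpha=-y^{\alpha+n}$, $u^{\alpha+n}=y^\alpha$), split into real and imaginary parts, and recover $(\mathrm{iv})$ by invertible sums and differences. The only difference is that you also supply the standard $\zeta$-differentiation/ODE proof of $(\mathrm{i})$--$(\mathrm{iii})$, which the paper simply cites as well known.
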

\begin{proof} The equivalence of (i)-(iii) are well known.  We shall transform (iii) into (iv).
It is easy to find
\begin{equation}v^\alpha\frac{\partial}{\partial v^\alpha}=\frac12 \left(y^i \frac{\partial}{\partial y^i} - \sqrt{-1} u^k \frac{\partial}{\partial y^k} \right).\end{equation}
Thus
\begin{align}
v^{\alpha}H_{\alpha}&= \frac12 \left(y^i \frac{\partial}{\partial y^i} - \sqrt{-1} u^k \frac{\partial}{\partial y^k} \right)  \left( R + \sqrt{-1} I \right) \nonumber \\
&= \frac12 \left( y^i R^i + u^k I_k \right) + \frac{\sqrt{-1}}{2} \left( y^i I_i - u^j R_k \right).
\end{align}
Hence, $H_{\alpha}v^{\alpha} = pH= p R + \sqrt{-1} p I$ is equivalent to
\begin{equation}\label{2eq005}
y^i R^i + u^k I_k  = 2p R, \ \ \ y^i I_i - u^j R_k  = 2p I.
\end{equation}
Similarly, $H_{\bar{\alpha}}\bar{v}^{\alpha} = qH=q R + \sqrt{-1} q I$ is equivalent to
\begin{equation}\label{2eq006}
y^i R_i - u^k I_k = 2q R,\  \ \ u^k R_k + y^i I_i = 2q I.
\end{equation}
It is clear that (\ref{2eq005}) together with (\ref{2eq006}) is equivalent to (iv).
\end{proof}

With the help of the above lemma, we can study the $J$-invariance of the \textit{real fundamental tensor}
\begin{equation}g=g_{ij}(y)dx^i\otimes dx^j.\end{equation}
This fundamental tensor can be considered as an inner product on the pullback bundle $\pi^*T_{\mathbb{R}}M$ where $\pi:\tilde M\to M$.
In other words, the inner product at $y$ can be defined as
\begin{equation}g_y(X,Y)=g_{ij}(y)X^iY^j\end{equation}
where $X=X^i\partial/\partial x^i$ and $Y=Y^j\partial/\partial x^j$. Note that $J$ can naturally act on $\pi^*T_{\mathbb{R}}M$. Thus, we can consider the $J$-invariance of $g_y$.

\begin{lemma}\label{Jinvariant}
Let $G$ be a strongly convex complex Finlser metric. Then
\begin{equation}\label{eq3} g_y(Jy, JX) = g_y(y, X), \end{equation}
for any $X \in \pi^*T_{\pi(y)}M$. Particularly, it holds
\begin{equation}\begin{array}{lr}
g_y(y,y) = g_y(Jy, Jy), & g_y(y , Jy) = 0.
\end{array}\end{equation}
\end{lemma}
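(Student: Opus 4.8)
The plan is to reduce the whole statement to two first-order homogeneity identities for $G$ expressed in the real coordinates, and then differentiate them once. First I would apply Lemma \ref{Lem01} to the function $H=G$ itself. Since $G=F^2$ obeys $G(\zeta v)=|\zeta|^2G(v)=\zeta\bar\zeta\,G(v)$, it is $(1,1)$-homogeneous, so $p=q=1$ in condition (ii); moreover $G$ is real-valued, so in the notation of that lemma $R=G$ and $I\equiv 0$. Condition (iv) then collapses to exactly two nontrivial identities (the other two being vacuous because $I\equiv 0$ and $p-q=0$), namely the Euler relation $G_ky^k=2G$ and the orthogonality relation $G_ku^k=0$, where $u=Jy$ and $G_k=\partial G/\partial y^k$. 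Extracting this second identity is the real content of the argument: it is the translation of the complex homogeneity into the $J$-direction.

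Next I would differentiate both identities with respect to $y^l$. Because $u^k=J^k_my^m$ has constant coefficients, $\partial u^k/\partial y^l=J^k_l$, so differentiating $G_ku^k=0$ yields $G_{kl}u^k=-G_kJ^k_l$, while differentiating $G_ky^k=2G$ yields the standard second-order Euler relation $G_{kl}y^k=G_l$. Here $G_{kl}=\partial^2G/\partial y^k\partial y^l=2g_{kl}$, which is the only link between these computations and the fundamental tensor. With these two second-order identities in hand the main equation \eqref{eq3} becomes a short contraction. For an arbitrary $X=X^l\partial/\partial x^l$ I would write $g_y(Jy,JX)=g_{ij}u^iJ^j_lX^l=\tfrac12 G_{ij}u^iJ^j_lX^l$, substitute $G_{ij}u^i=-G_kJ^k_j$, and then use the defining property $J^k_jJ^j_l=(J^2)^k_l=-\delta^k_l$ of the complex structure to obtain $g_y(Jy,JX)=\tfrac12 G_lX^l$. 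On the other side $g_y(y,X)=\tfrac12 G_{ij}y^iX^j=\tfrac12 G_jX^j$ by the Euler relation, and the two sides coincide, which proves \eqref{eq3}.

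The two particular identities then follow immediately by specialization. Setting $X=y$ in \eqref{eq3} gives $g_y(Jy,Jy)=g_y(y,y)$, and for the orthogonality I would compute directly $g_y(y,Jy)=\tfrac12 G_{ij}y^iu^j=\tfrac12 G_ju^j=0$, using first $G_{ij}y^i=G_j$ and then the first-order relation $G_ku^k=0$. The step I expect to be the main obstacle is the very first one, correctly deriving $G_ku^k=0$ from the $(1,1)$-homogeneity through Lemma \ref{Lem01} and keeping the real/imaginary bookkeeping straight; once that identity is available, every subsequent step is mechanical, the one essential algebraic input being $J^2=-\mathrm{Id}$.
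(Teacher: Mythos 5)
Your proof is correct and follows essentially the same route as the paper: both extract the identity $G_ku^k=0$ from Lemma \ref{Lem01}(iv) applied to the real-valued $(1,1)$-homogeneous function $G$, differentiate it once in $y$, and combine the result with the Euler relation $G_{ij}y^i=G_j$ and $J^2=-\mathrm{Id}$ to obtain \eqref{eq3} and its specializations. The only cosmetic difference is that you verify $g_y(y,Jy)=0$ directly from $G_ku^k=0$, whereas the paper obtains it by setting $X=Jy$ in \eqref{eq3}; these are equivalent.
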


\begin{proof}
Since $G(v)$ is a real-valued function $G(v) = G(v) + \sqrt{-1} \cdot 0$ and is of $(1,1)$-homogeneous, by  Lemma \ref{Lem01} (iv), we have
\begin{equation} G_k u^k = 0 \end{equation}
where $u^k = J^k_s y^s$. Differentiate the above formula with $y^i$, we have
\begin{equation} G_{ik} u^k + G_k J^k_i = 0. \end{equation}
Noting $G_{ik}=2g_{ik}$ and $G_k=2g_{jk}y^j$, one has
\begin{equation} 0=g_{ik} u^k + g_{jk}y^j J^k_i=g_{ik} J^k_jy^j + g_{jk}y^j J^k_i\end{equation}
which is equivalent to  $ g_y(Jy, JX) = g_y(y, X)$. Taking $X=y$ and $X=u$, one shall get
$g_y(y,y) = g_y(u,u)$ and $g_y(y , u)=0$.
\end{proof}

The above Lemma present the partial $J$-invariance of $g$. The following rigid result tells us $g$ is $J$-invariant if and only if the metric is Hermitian.
\begin{proposition}
Let $G$ be a strongly convex complex Finsler metric. Then $G$ is a Hermitian metric   if
$$g_y(JX,JY)=g_y(X,Y)$$
for any $X,Y \in \pi^*T_{\pi(y)}M$.
\end{proposition}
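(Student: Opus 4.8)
The plan is to pass from the real fundamental tensor $g$ to the complex Hessian of $G$, to show that the hypothesis of full $J$-invariance is equivalent to the vanishing of the $(2,0)$-part $G_{\alpha\beta}$, and then to deduce from homogeneity that $G_{\alpha\bar\beta}$ is constant along the fibres, which is precisely the statement that $G$ is Hermitian.

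First I would record the action of $J$ under the isomorphism ${}_o$. Using \eqref{1eq001} and \eqref{2eq001} one computes $(JX)_o^\alpha = \sqrt{-1}\,X_o^\alpha$, so $J$ corresponds to multiplication by $\sqrt{-1}$ on $T^{1,0}M$. Writing $\xi = X_o$, $\eta = Y_o$ and using the frame identity $X^i\frac{\partial}{\partial y^i} = \xi^\alpha\dot\partial_\alpha + \bar\xi^\alpha\dot\partial_{\bar\alpha}$ (immediate from the formulas for $\dot\partial_\alpha,\dot\partial_{\bar\alpha}$), I would expand the real Hessian into complex components. Since $G$ is real, $\overline{G_{\alpha\beta}} = G_{\bar\alpha\bar\beta}$ and $\overline{G_{\alpha\bar\beta}} = G_{\bar\alpha\beta}$, and one obtains
\[
2g_y(X,Y) = X^iY^j\frac{\partial^2 G}{\partial y^i\partial y^j} = 2\,\mathrm{Re}\!\left(G_{\alpha\beta}\xi^\alpha\eta^\beta\right) + 2\,\mathrm{Re}\!\left(G_{\alpha\bar\beta}\xi^\alpha\bar\eta^\beta\right).
\]
Replacing $(X,Y)$ by $(JX,JY)$ multiplies $\xi,\eta$ by $\sqrt{-1}$; this reverses the sign of the first term and leaves the second untouched, so
\[
g_y(JX,JY) - g_y(X,Y) = -2\,\mathrm{Re}\!\left(G_{\alpha\beta}\xi^\alpha\eta^\beta\right).
\]
Because $X \mapsto X_o$ is an $\mathbb{R}$-linear isomorphism onto $\mathbb{C}^n$, the values $\xi,\eta$ are arbitrary and may be rescaled by any $\zeta\in\mathbb{C}$; hence the hypothesis forces $G_{\alpha\beta}\xi^\alpha\eta^\beta = 0$ for all $\xi,\eta$, that is $G_{\alpha\beta}\equiv 0$ (and, by conjugation, $G_{\bar\alpha\bar\beta}\equiv 0$) on $\tilde M$.

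It then remains to deduce that $G$ is Hermitian. Here I would use only that the fibre operators $\dot\partial_\gamma,\dot\partial_{\bar\gamma}$ are constant-coefficient combinations of the $\partial/\partial y^i$ and hence commute on the smooth function $G$. Reordering derivatives,
\[
\dot\partial_\gamma G_{\alpha\bar\beta} = \dot\partial_{\bar\beta}G_{\alpha\gamma} = 0, \qquad \dot\partial_{\bar\gamma}G_{\alpha\bar\beta} = \dot\partial_{\alpha}G_{\bar\gamma\bar\beta} = 0,
\]
so every fibre derivative of $G_{\alpha\bar\beta}$ vanishes and $G_{\alpha\bar\beta} = G_{\alpha\bar\beta}(z,\bar z)$ depends on the base point alone. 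Finally, the Euler identities of Lemma \ref{Lem01} yield the standard relation $G = G_{\alpha\bar\beta}v^\alpha\bar v^\beta$, so that $G = G_{\alpha\bar\beta}(z,\bar z)\,v^\alpha\bar v^\beta$ is a Hermitian metric. I expect the one genuinely substantive step to be the translation in the previous paragraph: recognizing that the extra content of full $J$-invariance, beyond the automatic partial invariance of Lemma \ref{Jinvariant}, is exactly $G_{\alpha\beta}=0$. Once this is in hand, the homogeneity argument is purely formal.
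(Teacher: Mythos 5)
Your proof is correct, but it takes a genuinely different route from the paper's. The paper never leaves the real formalism: it differentiates the invariance identity $g_{ij}J^i_pJ^j_q=g_{pq}$ once in $y$, obtaining $C_{ijs}J^i_pJ^j_q=C_{pqs}$ for the Cartan torsion, and then plays the total symmetry of $C$ against $J^2=-\mathrm{id}$ (two further contractions with $J$) to force $C\equiv 0$; thus $g$ is $y$-independent and $J$-invariant, i.e.\ Hermitian. You instead translate the hypothesis into complex coordinates, show that full $J$-invariance is exactly the vanishing of the $(2,0)$-part $G_{\alpha\beta}$ of the fibre Hessian, and then use commuting vertical derivatives together with the Euler identities to obtain $G=G_{\alpha\bar\beta}(z,\bar z)\,v^\alpha\bar v^\beta$ explicitly. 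I checked your key display: expanding $X^i\partial/\partial y^i=\xi^\alpha\dot\partial_\alpha+\bar\xi^\alpha\dot\partial_{\bar\alpha}$ with $\xi=X_o$ and using the reality of $G$ indeed gives $g_y(JX,JY)-g_y(X,Y)=-2\,\mathrm{Re}\left(G_{\alpha\beta}\xi^\alpha\eta^\beta\right)$, and replacing $X$ by $JX$ (equivalently $\xi\mapsto\sqrt{-1}\,\xi$) kills the imaginary part, so $G_{\alpha\beta}\equiv 0$ follows; the final step from vanishing vertical derivatives to fibrewise constancy is legitimate since the punctured fibres $\mathbb{C}^n\setminus\{0\}$ are connected. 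What the paper's route buys is brevity and a sharp identification of the obstruction as the Cartan torsion (invariance alone forces the metric to be Riemannian, with the Hermitian property then immediate). What your route buys is conceptual clarity: since homogeneity already gives $G_{\alpha\beta}v^\beta=0$, your computation pinpoints the automatic partial invariance of Lemma~\ref{Jinvariant} as precisely the $v$-contracted trace of the identity, so the extra content of the hypothesis is the full vanishing $G_{\alpha\beta}\equiv 0$, and the Hermitian expression of $G$ comes out constructively rather than being inferred from $C=0$.
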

\begin{proof}
Assume $g_y(JY, JX) = g_y(Y,X)$ for any $X,Y$.
In local coordinates, we have
\begin{equation}
g_{ij} J^i_p J^j_q  = g_{pq}.
\end{equation}
Differentiating it  with $y^s$, we have
\begin{equation}\label{2eq007}
C_{ijs} J^i_p J^j_q = C_{pqs},
\end{equation}
where $C_{ijs}$ are the component of  the Cartan torsion. Since $C$ is symmetric, it turns out
\begin{equation}
C_{ijs} J^i_p J^j_q=C_{pqs}=C_{sqp} = C_{ijp} J^i_s J^j_q.
\end{equation}
Hence
\begin{equation}
C_{ijp} J^i_s J^j_q J^q_b J^p_a = C_{ijs} J^i_p  J^p_a J^j_q J^q_b
\end{equation}
that is
\begin{equation}\label{2eq008}
- C_{ipb}  J^i_s J^p_a = C_{abs}.
\end{equation}

Comparing (\ref{2eq007}) and (\ref{2eq008}), one can find
$C_{abs} = 0$ which implies $G$ is a Riemannian metric.
\end{proof}

\section{Connections and curvatures}\label{Sec3}

Let $(M, G)$ be a strongly convex complex Finsler manifold.
We shall recall the connections and curvatures of $G$ in both real and complex realm.

Denote $\hat{\mathbb{G}}^i$ the \textit{real spray coefficients} which is
\begin{equation}
\hat{\mathbb{G}}^i = \frac14 g^{il}(G_{l;k}y^k-G_{;l})
\end{equation}
where $(g^{ik}) = (g_{ij})^{-1}$.
The \textit{nonlinear connection coefficients} are defined as
\begin{equation}\hat{\mathbb{G}}^k_i=\frac{\partial \hat{\mathbb{G}}^k}{\partial y^i}.\end{equation}
Then the real horizontal frame and vertical coframe can be given by
\begin{equation}
\frac{\delta}{\delta x^i} = \frac{\partial}{\partial x^i} -\hat{\mathbb{G}}^k_i \frac{\partial}{\partial y^k},\ \ \delta y^j = dy^j + \hat{\mathbb{G}}^j_k dx^k.
\end{equation}
The \textit{Berwald connection} 1-forms $\hat{\omega}^i_j$ are
\begin{equation}
\hat{\omega}^i_j = \hat{\mathbb{G}}^i_{jk} dx^k
\end{equation}
where
\begin{equation}\hat{\mathbb{G}}^i_{jk}=\frac{\partial^2 \hat{\mathbb{G}}^i}{\partial y^j\partial y^k}=\frac{\partial \hat{\mathbb{G}}^i_j}{\partial y^k}.\end{equation}
The following give the curvature forms of the real Berwald connection
\begin{equation}
\Omega^i_j = \frac12 R^i_{jkl} dx^k \wedge dx^l  +  B^i_{jkl}  \delta y^l \wedge dx^k
\end{equation}where
\begin{equation}
R^i_{jkl} = \frac{{\delta} \hat{\mathbb{G}}^i_{jl}}{{\delta} x^k} -  \frac{{\delta}\hat{\mathbb{G}}^i_{jk} }{{\delta} x^l} + \hat{\mathbb{G}}^i_{ks} \hat{\mathbb{G}}^s_{jl} - \hat{\mathbb{G}}^s_{jk}\hat{\mathbb{G}}^i_{ls}
\end{equation} is the \textit{Riemann curvature} and
\begin{equation}B^i_{jkl} = \hat{\mathbb{G}}^i_{jkl}=\frac{\partial^3 \hat{\mathbb{G}}^i}{\partial y^j\partial y^k\partial y^l}\end{equation}
is called the \textit{Berwald curvature}.

Setting $R_{ik} = g_{si}R^s_{jkl} y^j y^l$, the \textit{flag curvature} is defined by
\begin{equation}
\mathbf{K}(y, V) = \frac{ R_{ik} V^i  V^k}{\left(g_{ij}g_{kl}-g_{ik}g_{jl}\right)y^iy^jV^kV^l}
\end{equation}
where $V = V^i \partial/\partial x^i$, and the \textit{Ricci curvature} is given by
\begin{equation}\mathbf{Ric}(y)=\frac{g^{ik}R_{ik}}{G(y)}= \sum_{j=1}^{2n-1} \mathbf{K}(y, e_i)\end{equation}
where $\{e_i\}$ is a $g_y$-orthonormal frame  with $e_{2n}=y/F$.\\

Now let us consider $(M,G)$ as a complex metric.
The Chern-Finsler nonlinear connection coefficient $\Gamma^{\alpha}_{;\beta}$ is given by
\begin{equation}
\Gamma^{\alpha}_{;\beta} := G^{\bar{\tau}\alpha}G_{\bar{\tau};\beta},
\end{equation}
where $\left(G^{\bar{\tau}\alpha}\right) = \left(G_{\beta\bar{\tau}}\right)^{-1}$. The
\textit{complex spray coefficients} $\mathbb{G}^{\alpha}$'s are
\begin{equation}\label{3eq050}
\mathbb{G}^{\alpha} = \frac12 \Gamma^{\alpha}_{;\beta} v^{\beta}.
\end{equation}
The complex horizontal frame and vertical coframe are defined as
\begin{equation}
\delta_{\mu} := \partial_{\mu} - \Gamma^{\alpha}_{;\mu} \dot{\partial}_{\alpha},\ \
\delta v^\alpha = dv^\alpha+ \Gamma^{\alpha}_{;\mu} dz^\mu.
\end{equation}
The \textit{Chern-Finsler connection} is defined by the following 1-forms
\begin{equation}
\omega^{\alpha}_{\beta} = \Gamma^{\alpha}_{\beta;\mu} dz^{\mu} + C^{\alpha}_{\beta\gamma} \delta v^{\gamma},
\end{equation}
where
\begin{equation}\begin{array}{ll}
\Gamma^{\alpha}_{\beta;\mu} := G^{\bar{\tau}\alpha} \delta_{\mu}\left(G_{\beta\bar{\tau}}\right) ,&
C^{\alpha}_{\beta\gamma} := G^{\bar{\tau}\alpha}G_{\beta\bar{\tau}\gamma}.
\end{array}\end{equation}
The curvature form of of Chern-Finsler connection is
\begin{equation}\label{3eq010}
\Omega^{\alpha}_{\beta} = \bar\partial\omega^{\alpha}_{\beta}
\end{equation}
which has four parts
\begin{equation}
\Omega^{\alpha}_{\beta} = R^{\alpha}_{\beta;\mu\bar{\nu}} dz^{\mu} \wedge dz^{\bar{\nu}} + S^{\alpha}_{\beta\mu;\bar{\nu}} \delta v^{\mu} \wedge dz^{\bar{\nu}}  + P^{\alpha}_{\beta\bar{\nu};\mu} dz^{\mu} \wedge \delta v^{\bar{\nu}} + Q^{\alpha}_{\beta\mu\bar{\nu}} \delta v^{\mu} \wedge \delta v^{\bar{\nu}}.
\end{equation}
The \textit{holomorphic curvature} $\mathbf{H}$ is defined as
\begin{equation}\label{3eq002}
\mathbf{H}(v) = \frac{2}{G^2} R_{\alpha\bar{\beta}; \mu\bar{\nu}} v^{\alpha} v^{\bar{\beta}} v^{\mu}v^{\bar{\nu}}
\end{equation}
where $R_{\alpha\bar{\beta}; \mu\bar{\nu}} = G_{\sigma\bar{\beta}}R^{\sigma}_{\alpha; \mu\bar{\nu}}$.

As the end of this section, we state the K\"ahler conditions defined by Abate-Patrizio.
\begin{definition}[\cite{MG}, \cite{CB}]
Let $G$ be a strongly pseudoconvex complex Finsler metric.

(i) $G$ is called a  K\"ahler Finsler metric if and only if $\Gamma^{\alpha}_{\beta;\mu} = \Gamma^{\alpha}_{\mu;\beta}$;

(ii) $G$ is called a weakly K\"ahler Finsler metric if $G_{\alpha}\left(\Gamma^{\alpha}_{\beta;\mu}- \Gamma^{\alpha}_{\mu;\beta}\right) v^{\mu} = 0$.
\end{definition}

For a weakly K\"ahler Finsler metric,
Li-Qiu-Zhong discovered that the holomorphic curvature coincides with the flag curvature of the holomorphic flag.
\begin{theorem}[\cite{LQZ}]\label{holo}
Let $G$ be is a strongly convex weakly K\"ahler Finsler metric, then
$\mathbf{K}(y,Jy)=\mathbf{H}(v)$ where $v=y_o$.
\end{theorem}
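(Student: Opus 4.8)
The plan is to evaluate both quantities with flag pole $y$ and transverse edge $u=Jy$, and to show that $\mathbf{K}(y,Jy)$ and $\mathbf{H}(v)$ share the same denominator and the same numerator. For the denominator I first specialize the definition of $\mathbf{K}$ to $V=u$ and expand
\[
(g_{ij}g_{kl}-g_{ik}g_{jl})y^iy^ju^ku^l=g_y(y,y)\,g_y(u,u)-g_y(y,u)^2.
\]
By Lemma~\ref{Jinvariant} one has $g_y(u,u)=g_y(Jy,Jy)=g_y(y,y)$ and $g_y(y,u)=g_y(y,Jy)=0$, while $g_y(y,y)=g_{ij}y^iy^j=G$ by Euler's theorem. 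Hence the denominator collapses to $G^2$, matching the $G^2$ in the definition of $\mathbf{H}$. The theorem is thereby reduced to the numerator identity
\[
g_{si}R^s_{jkl}\,y^jy^l\,u^iu^k=2\,R_{\alpha\bar\beta;\mu\bar\nu}\,v^\alpha v^{\bar\beta}v^\mu v^{\bar\nu}.
\]

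The decisive step is to express the real Berwald curvature on the left through the complex Chern--Finsler data on the right, and this is where weak K\"ahlerianity enters. First I would record the homogeneity identity $\Gamma^\alpha_{;\mu}=\Gamma^\alpha_{\beta;\mu}v^\beta$, which follows from $G_{\beta\bar\tau}v^\beta=G_{\bar\tau}$ together with $G_{\beta\bar\tau\gamma}v^\beta=0$ (the latter because $G_{\gamma\bar\tau}$ is $(0,0)$-homogeneous, cf. Lemma~\ref{Lem01}); consequently $\mathbb{G}^\alpha=\tfrac12\Gamma^\alpha_{\beta;\mu}v^\beta v^\mu$. The weakly K\"ahler condition $G_\alpha(\Gamma^\alpha_{\beta;\mu}-\Gamma^\alpha_{\mu;\beta})v^\mu=0$ is precisely the symmetry guaranteeing that the real geodesic spray of the associated metric $F^o$ coincides with the complex spray; spelled out through $v^\alpha=y^\alpha+\sqrt{-1}y^{\alpha+n}$ this yields the relation
\[
\hat{\mathbb{G}}^\alpha+\sqrt{-1}\,\hat{\mathbb{G}}^{\alpha+n}=\mathbb{G}^\alpha,
\]
which expresses the $2n$ real spray coefficients in terms of the $n$ complex ones.

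Granting this identification of sprays, I would differentiate it successively in $y$ to pass from the nonlinear coefficients $\hat{\mathbb{G}}^i_j$ to the Berwald coefficients $\hat{\mathbb{G}}^i_{jk}$, and then reassemble the real Riemann curvature $R^i_{jkl}$ in terms of the horizontal derivatives $\delta_\mu,\delta_{\bar\nu}$ of the complex connection. Contracting with $y^jy^l$ and $u^iu^k$ and using $u=Jy$ with the explicit form of $J$ in \eqref{1eq001}, each real contraction converts into the corresponding contraction in $v^\alpha,v^{\bar\beta},v^\mu,v^{\bar\nu}$; the homogeneity relations of Lemma~\ref{Lem01} then annihilate all vertical ($\delta y$, $\delta v$) contributions, so that only the horizontal block $R_{\alpha\bar\beta;\mu\bar\nu}$ survives, producing the factor $2$. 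Together with the denominator computation this gives $\mathbf{K}(y,Jy)=\mathbf{H}(v)$.

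I expect the main obstacle to be this last translation from the real $2n$-dimensional contraction to the Hermitian $(1,1)$-contraction. One must verify that the Berwald-curvature term $B^i_{jkl}\,\delta y^l\wedge dx^k$ on the real side and the mixed blocks $S^\alpha_{\beta\mu;\bar\nu}$, $P^\alpha_{\beta\bar\nu;\mu}$, $Q^\alpha_{\beta\mu\bar\nu}$ on the complex side all drop out along the holomorphic flag. This is exactly the point at which the weakly K\"ahler identity is used to cancel the antisymmetric part $\Gamma^\alpha_{\beta;\mu}-\Gamma^\alpha_{\mu;\beta}$ after contraction with $G_\alpha$ (equivalently with $v^\beta$); without weak K\"ahlerianity these terms persist and the two curvatures genuinely differ, so all the subtlety lies in establishing their vanishing along $v$.
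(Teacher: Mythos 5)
First, a point of comparison: the paper itself gives no proof of this theorem --- it is quoted from the preprint \cite{LQZ} (with a factor-of-two change of convention for $\mathbf{H}$), so your proposal must be judged as an attempt to prove the cited result from scratch. The parts you actually carry out are correct. The denominator computation
\[
(g_{ij}g_{kl}-g_{ik}g_{jl})y^iy^ju^ku^l=g_y(y,y)\,g_y(u,u)-g_y(y,u)^2=G^2
\]
follows exactly as you say from Lemma \ref{Jinvariant}, Lemma \ref{Lem01} and Euler's theorem; the identity $\Gamma^\alpha_{;\mu}=\Gamma^\alpha_{\beta;\mu}v^\beta$ is correctly justified by homogeneity; and the spray identification $\mathbb{G}^\alpha=\hat{\mathbb{G}}^\alpha+\sqrt{-1}\,\hat{\mathbb{G}}^{\alpha+n}$ is indeed the right characterization of weak K\"ahlerianity --- it is precisely \eqref{5eq002}, which the paper itself only cites (Proposition 2.6.2 of \cite{MG}, Lemma 3.1 of \cite{XZ}), so invoking it is legitimate. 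This correctly reduces the theorem to the numerator identity $g_{si}R^s_{jkl}y^jy^lu^iu^k=2R_{\alpha\bar\beta;\mu\bar\nu}v^\alpha v^{\bar\beta}v^\mu v^{\bar\nu}$.

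The genuine gap is that this numerator identity --- which carries the entire content of the theorem --- is described but never proved, as you yourself concede in calling it ``the main obstacle.'' Moreover, the mechanism you sketch is not quite aimed at the real difficulty. The blocks $S^\alpha_{\beta\mu;\bar\nu}$, $P^\alpha_{\beta\bar\nu;\mu}$, $Q^\alpha_{\beta\mu\bar\nu}$ never enter either side: $\mathbf{H}$ is defined purely through the horizontal block $R_{\alpha\bar\beta;\mu\bar\nu}$, and $R_{ik}$ is by definition the $dx\wedge dx$ block of the real curvature, so there are no ``vertical contributions'' to annihilate. The actual problem is that the real Berwald horizontal distribution (built from $\hat{\mathbb{G}}^i_j$) and the Chern--Finsler horizontal distribution (built from $\Gamma^\alpha_{;\mu}$) are \emph{different} distributions: the spray identity only guarantees that they agree after contraction with $y$. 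Consequently, ``reassembling $R^i_{jkl}$ in terms of $\delta_\mu,\delta_{\bar\nu}$'' is not a substitution; the real horizontal-horizontal curvature decomposes into the complex one plus correction terms built from the difference of the two nonlinear connections, and one must show these corrections vanish under the specific contraction $y^jy^lu^iu^k$. Compounding this, $\mathbb{G}^\alpha$ is not holomorphic in $v$, so each $y$-differentiation of the spray identity produces both $\dot{\partial}_\beta\mathbb{G}^\alpha$ and $\dot{\partial}_{\bar\beta}\mathbb{G}^\alpha$ terms; the Berwald coefficients $\hat{\mathbb{G}}^i_{jk}$ therefore mix holomorphic and antiholomorphic vertical derivatives, and tracking which combinations survive the contraction and produce exactly the factor $2$ is the proof. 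Nothing in your outline exhibits these cancellations, nor where the weakly K\"ahler condition re-enters to force them. What you have is a clean and correct reduction plus a plausible program; the missing computation is essentially the content of the cited preprint \cite{LQZ}.
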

We shall point out that  the holomorphic curvature defined in this paper is twice of that in \cite{LQZ}. Thus the above theorem is slightly different in \cite{LQZ}.

\section{Parallelism of the complex structure}\label{Sec5}
For a K\"ahler Hermitian metric, the complex structure is parallel with respect to the Riemann connection. Let us consider the weakly K\"ahler Finsler case in this section.

According to  Proposition 2.6.2 in  \cite{MG} or Lemma 3.1 in \cite{XZ},  a strongly convex complex Finsler metric   is a weakly K\"ahler  if and only if the complex spray and real spray satisfy
\begin{equation}\label{5eq002}
\mathbb{G}^{\beta} = \hat{\mathbb{G}}^{\beta} + \sqrt{-1}\hat{\mathbb{G}}^{\beta+n}
\end{equation}
in the complex coordinate system.  We shall calculate the covariant derivative of $J$ in this coordinate system.

Since $\mathbb{G}^{\alpha}$ are of $(2,0)$-homogeneous, by (\ref{1eq001}) in Lemma \ref{Lem01}, we have
\begin{equation}\label{5eq006}  \begin{array}{lcl}
\hat{\mathbb{G}}^{\alpha}_k u^k = -2 \hat{\mathbb{G}}^{\alpha+n} & \text{and} &
\hat{\mathbb{G}}^{\alpha+n}_k u^k = 2\hat{\mathbb{G}}^{\alpha}. \end{array}
\end{equation}
Recall that $J = J^i_k dx^k \otimes \frac{\partial}{\partial x^i}$ has the form
\begin{equation} \left\{ \begin{array}{l}
J^i_{\alpha} = \delta^i_{\alpha+n},\\
J^i_{\alpha +n} = - \delta^i_{\alpha} \end{array} \right.
\end{equation}
in this coordinate system.
Thus, (\ref{5eq006}) can be read as
\begin{equation}\label{parallel0} \hat{\mathbb{G}}^i_k u^k = 2J^i_k\hat{\mathbb{G}}^k.\end{equation}
Taking the derivatives with respect to $y^s$ ,  one has
\begin{equation}\label{parallel00}
\hat{\mathbb{G}}^i_{sk} u^k + \hat{\mathbb{G}}^i_k J^k_s = 2J^i_k\hat{\mathbb{G}}^k_s.
\end{equation}

The covariant derivative of $J$ with respect to the real Berwald connection is defined by
\begin{align}
J^i_{k|l} &:= \frac{\delta J^i_k}{\delta x^l} +  J^s_k \hat{\mathbb{G}}^i_{sl} - J^i_s \hat{\mathbb{G}}^s_{kl}.
\end{align}
 Since $J^i_k$ are constant in the complex coordinate system, locally it holds
\begin{align}
J^i_{k|l}&=J^s_k \hat{\mathbb{G}}^i_{sl} - J^i_s \hat{\mathbb{G}}^s_{kl}.
\end{align}
Contracting with $y^k$ and $y^l$, we reach
\begin{align}
J^i_{k|l} y^k y^l& = J^s_k \hat{\mathbb{G}}^i_{sl}y^ly^k - J^i_s \hat{\mathbb{G}}^s_{kl}y^ly^k  \nonumber\\
&=J^s_k \hat{\mathbb{G}}^i_s y^k - J^i_s \hat{\mathbb{G}}^s_k y^k = \hat{\mathbb{G}}^i_s u^s - 2 J^i_s \hat{\mathbb{G}}^s.
\end{align}
By (\ref{parallel0}) and $y^k_{|l}=0$, it turns out

\begin{lemma}\label{lemJ}
On a  strongly convex weakly K\"ahler Finsler manifold, it holds
\begin{equation}
\left(\nabla_{y^{\mathcal{H}}}J\right)y = \nabla_{y^{\mathcal{H}}}\left(Jy\right)=0.
\end{equation}
where $y^{\mathcal{H}} = y^i \delta/\delta x^i$ and $\nabla $ is the covariant differential with respect to the Berwald connection.
\end{lemma}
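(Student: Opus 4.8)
The plan is to prove the two identities in order: first establish $(\nabla_{y^{\mathcal{H}}}J)y=0$ by a componentwise computation in the complex coordinate frame, and then deduce $\nabla_{y^{\mathcal{H}}}(Jy)=0$ from the Leibniz rule together with the horizontal parallelism of the canonical section $y$. The whole argument lives in the real picture, but its one nontrivial input comes from the weakly K\"ahler hypothesis phrased in complex coordinates.

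For the first identity I would expand the Berwald covariant derivative
$$J^i_{k|l} = \frac{\delta J^i_k}{\delta x^l} + J^s_k \hat{\mathbb{G}}^i_{sl} - J^i_s \hat{\mathbb{G}}^s_{kl},$$
and exploit that in the complex coordinate system the $J^i_k$ are the constant matrix of (\ref{1eq001}), so the term $\delta J^i_k/\delta x^l$ drops out identically. Contracting the remaining two terms against $y^k y^l$ and using the homogeneity of the real spray --- namely $\hat{\mathbb{G}}^i_{sl}y^l=\hat{\mathbb{G}}^i_s$ from the degree-one homogeneity of $\hat{\mathbb{G}}^i_s$, and $\hat{\mathbb{G}}^s_{kl}y^ky^l=2\hat{\mathbb{G}}^s$ from the degree-two homogeneity of $\hat{\mathbb{G}}^s$ --- reduces $(\nabla_{y^{\mathcal{H}}}J)y$ to $\hat{\mathbb{G}}^i_s u^s - 2J^i_s\hat{\mathbb{G}}^s$, where $u=Jy$. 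It then remains only to see that this expression vanishes, which is exactly the spray identity (\ref{parallel0}), $\hat{\mathbb{G}}^i_k u^k = 2J^i_k\hat{\mathbb{G}}^k$. I regard (\ref{parallel0}) as the true heart of the matter and the place where the hypothesis is used: it follows by combining the characterization (\ref{5eq002}) of weakly K\"ahler, $\mathbb{G}^\beta = \hat{\mathbb{G}}^\beta + \sqrt{-1}\hat{\mathbb{G}}^{\beta+n}$, with the complex Euler theorem (Lemma \ref{Lem01}) applied to the $(2,0)$-homogeneous complex spray $\mathbb{G}^\alpha$; condition (iv) there, with $(p,q)=(2,0)$, yields the pair $\hat{\mathbb{G}}^\alpha_k u^k=-2\hat{\mathbb{G}}^{\alpha+n}$ and $\hat{\mathbb{G}}^{\alpha+n}_k u^k=2\hat{\mathbb{G}}^\alpha$, which repackage into (\ref{parallel0}) once the right-hand sides are recognized as the action of $J$.

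For the second identity I would differentiate by the Leibniz rule, $\nabla_{y^{\mathcal{H}}}(Jy)=(\nabla_{y^{\mathcal{H}}}J)y + J(\nabla_{y^{\mathcal{H}}}y)$. The first summand is zero by what was just shown, and the second vanishes because the canonical section is horizontally parallel for the Berwald connection, i.e. $y^k_{|l}=0$ (equivalently $\delta y^i/\delta x^l=-\hat{\mathbb{G}}^i_l$ cancels $y^k\hat{\mathbb{G}}^i_{kl}=\hat{\mathbb{G}}^i_l$ by homogeneity), so $J(\nabla_{y^{\mathcal{H}}}y)=0$. The main obstacle is therefore not the covariant differentiation, which is routine once $J$ is constant in the frame, but the faithful translation of the homogeneity of the complex object $\mathbb{G}^\alpha$ into real-coordinate statements about $\hat{\mathbb{G}}^i$ and $J$; this bookkeeping is precisely where Lemma \ref{Lem01} is indispensable, and all the weakly K\"ahler content is concentrated in (\ref{parallel0}).
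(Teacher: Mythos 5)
Your proposal is correct and follows essentially the same route as the paper: the identity $\hat{\mathbb{G}}^i_k u^k = 2J^i_k\hat{\mathbb{G}}^k$ obtained from the weakly K\"ahler spray relation $\mathbb{G}^{\beta} = \hat{\mathbb{G}}^{\beta} + \sqrt{-1}\hat{\mathbb{G}}^{\beta+n}$ together with Lemma \ref{Lem01}(iv) for the $(2,0)$-homogeneous $\mathbb{G}^\alpha$, then the contraction of $J^i_{k|l}$ with $y^ky^l$ using constancy of $J^i_k$ and spray homogeneity, and finally $y^k_{|l}=0$ to pass between the two stated identities. No gaps; the homogeneity bookkeeping and the identification of the right-hand sides with the action of $J$ are exactly as in the paper's argument.
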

Since the real Berwald connection and real Chern connection only differ by a term of Landsberg curvature, the above lemma is also true for the real Chern connection.
\section{Synge-Tsukamoto theorem}

In this section, we shall prove the Synge-Tsukamoto theorem for weakly K\"ahler Finsler manifolds.

Let us recall the theory of geodesics in real Finsler geometry (cf. \cite{BCS}). Consider any unit speed geodesic $\gamma(t)(0\leq t\leq d)$ with velocity field $T=\dot\gamma$.
Set the linear covariant derivative with reference vector $T$ as
\begin{equation}D^T_{\frac{\partial}{\partial x^j}}\frac{\partial}{\partial x^i}=\hat{\mathbb{G}}^k_{ij}(T)\frac{\partial}{\partial x^k}.\end{equation}
Then $D^T_TT=0$.
 Suppose $U$ is a variation field along $\gamma$, then the second variation of arc length is
\begin{align}\label{2ndvar}
L''=I_\gamma(U_\perp,U_\perp)+g_T(D^T_UU,T)|_{t=0}^d.
\end{align}
where $U_\perp=U-g_T(U,T)T$ is the $g_T$-orthogonal component with respect to $T$, and $I_\gamma$ is the \textit{index form} of $\gamma$
\begin{align}
I_\gamma(W,W)&=\int_0^d\Big[g_T(D_T^TW,D_T^TW)-g_T(R(W,T)T,W)\Big]dt
\end{align}
where $W$ is any vector field along $\gamma$. Particularly, if $g_T(T,W)=0$, we have
\begin{align}
I_\gamma(W,W)&=\int_0^d\Big[g_T(D_T^TW,D_T^TW)-g_T(W,W)\cdot \mathbf{K}(T,W)\Big]dt
\end{align}

Denote $V=JT$. According to Lemma \ref{Jinvariant} and Lemma \ref{lemJ}, we have
\begin{equation}\label{TV}g_T(T, V)=0,\ \  g_T(V,V)=1,\ \ D^T_TV=0.\end{equation}

\begin{theorem}  Let $(M,G)$ be a strongly convex weakly K\"ahler Finsler manifold. Suppose it is complete and the holomorphic curvature $\mathbf{H}\geq \lambda>0$ is bounded below uniformly by a positive constant. Then $M$ is compact and simply connected.
\end{theorem}
\begin{proof} Let us fist verify the compactness of $M$. Consider any unit speed geodesic $\gamma(t), 0\leq t\leq d$ with length $d=\pi/\sqrt{\lambda}$. Set
\begin{equation}s_\lambda(t)=\sin(\sqrt{\lambda}t)\end{equation}
and
\begin{equation}W(\gamma(t))=s_\lambda(t)V(\gamma(t)).\end{equation}
Then
\begin{equation}D^T_TW=s_\lambda'(t) V=\sqrt{\lambda}\cos(\sqrt{\lambda}t)V.\end{equation}
According to Theorem \ref{holo}, we have
\begin{equation}\mathbf{K}(T,W)=\mathbf{K}(T,V)=\mathbf{H}(V_o).\end{equation}
Thus, it holds
\begin{align}
I_\gamma(W,W)&=\int_0^d[g_T(D_T^TW,D_T^TW)-g_T(W,W)\mathbf{K}(T,W)]dt\nonumber\\
&=\int_0^d[g_T(D_T^TW,D_T^TW)-g_T(W,W)\mathbf{H}(T_o)]dt\nonumber\\
&\leq \int_0^d[(s_\lambda'(t))^2-\lambda(s_\lambda(t))^2]dt=\frac{1}{2}\sqrt{\lambda}\sin(2\sqrt{\lambda}d)=0.
\end{align}
A standard argument shows that the geodesic $\gamma$ must contain conjugate points, which implies the diameter of $M$ is bounded from above
\begin{equation} \mathrm{diam}(M)\leq d= \frac{\pi}{\sqrt{\lambda}}.\end{equation}
Therefore, $M$ is compact.

 Next, we show that $M$ is simply connected. Suppose the fundamental group $\pi(M)$ is nontrivial. Picking a nontrivial free homotopy class,  according to Theorem 8.7.1 in \cite{BCS}, there exists a shortest smooth closed geodesic $\gamma(t)(0\leq t\leq l)$ in this class.  The definition of vector fields $T$ and $V$ are the same as above. Produce a variation of $\gamma(t)$ such that the resulting variation vector field is $V(\gamma(t))$. Hence, the second variation of arc length reduces to
 \begin{equation}L''=-\int_0^l\mathbf{H}(T_o)dt\leq \lambda l<0\end{equation}
 which implies $\gamma$ is not minimal. This leads a contradiction.
\end{proof}

Following the argument in \cite{NZ}, we have the following result.
\begin{theorem}
Let $(M,G)$ be a compact strongly convex weakly K\"ahler Finsler manifold with positive holomorphic curvature. Then any holomorphic isometry of $M$ must have at least one fixed point.
\end{theorem}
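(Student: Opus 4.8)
The plan is to run a Synge--Weinstein type argument adapted to the complex structure, where the role played in the real case by an eigenvalue/orientation argument is taken over for free by the canonically parallel field $V=JT$ furnished by the partial parallelism of Section~4. Assume for contradiction that a holomorphic isometry $f$ has no fixed point. Since $M$ is compact, the displacement function $\rho(x)=d(x,f(x))$ is continuous and strictly positive, hence attains its minimum $l=\rho(p)>0$ at some $p\in M$. Let $\gamma:[0,l]\to M$ be a unit-speed minimal geodesic from $p$ to $f(p)$, with velocity $T=\dot\gamma$, and set $V=JT$. By Lemma \ref{Jinvariant} and Lemma \ref{lemJ} the relations (\ref{TV}) hold, so $V$ is a unit, $g_T$-orthogonal, \emph{parallel} field along $\gamma$.

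The first key step is to show that the broken path $\gamma$ followed by $f\circ\gamma$ has no corner at $f(p)$, i.e. $df_p(T(0))=T(l)$. For arbitrary $X\in T_pM$ I would set $c(s)=\exp_p(sX)$ and join $c(s)$ to $f(c(s))$ by $\sigma_s(t)=\exp_{\gamma(t)}(sW(t))$ for any field $W$ with $W(0)=X$, $W(l)=df_pX$. Then $L(\sigma_s)\ge \rho(c(s))\ge \rho(p)=L(\sigma_0)$, so $s=0$ minimizes $L(\sigma_\cdot)$ and the first variation of a geodesic gives $g_{T(l)}(df_pX,T(l))=g_{T(0)}(X,T(0))$ for all $X$. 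Taking $X=T(0)$ yields $g_{T(l)}(df_p T(0),T(l))=1$, and since $df_p T(0)$ is a unit vector the Finsler fundamental (support-function) inequality $g_{T(l)}(T(l),A)\le F(A)$, with equality only when $A$ is a positive multiple of $T(l)$, forces $df_p(T(0))=T(l)$. As $f$ is holomorphic, it follows that $df_p(V(0))=J\,df_p(T(0))=JT(l)=V(l)$.

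Next I build the competing variation $\alpha(s,t)=\exp_{\gamma(t)}(sV(t))$, whose variation field is exactly $V$. Because $f$ is an isometry one has $f\circ\exp_p=\exp_{f(p)}\circ df_p$, and together with $df_p(V(0))=V(l)$ this gives $\alpha(s,l)=f(\alpha(s,0))$, so each $\alpha(s,\cdot)$ joins $c(s)=\exp_p(sV(0))$ to $f(c(s))$. Hence its length obeys $L(s)\ge d(c(s),f(c(s)))=\rho(c(s))\ge\rho(p)=l=L(0)$, so $s=0$ minimizes $L$ and $L''(0)\ge 0$. I then evaluate $L''(0)$ by the second variation formula (\ref{2ndvar}). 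The boundary term $g_T(D^T_UU,T)|_0^l$ vanishes: the endpoint transversal curves satisfy $\alpha(s,l)=f(\alpha(s,0))$ and $T(l)=df_p(T(0))$, so the isometry $f$ carries the $T(0)$-acceleration at $p$ to the $T(l)$-acceleration at $f(p)$ and preserves the associated inner product, making the two contributions cancel. Since $V$ is parallel and $g_T$-orthogonal to $T$, what remains is
\begin{equation*}
L''(0)=I_\gamma(V,V)=-\int_0^l g_T(V,V)\,\mathbf{K}(T,JT)\,dt=-\int_0^l \mathbf{H}(T_o)\,dt<0,
\end{equation*}
where I used $g_T(V,V)=1$, Theorem \ref{holo}, and the positivity of $\mathbf{H}$. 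This contradicts $L''(0)\ge 0$, so $f$ must have a fixed point.

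The main obstacle I anticipate lies in the two Finsler subtleties, both stemming from the reference-vector dependence of $g$ and of the covariant derivative $D^T$. First, passing from the vanishing first variation to $df_p(T(0))=T(l)$ cannot use a naive Cauchy--Schwarz equality and must invoke the Finsler fundamental inequality. Second, the cancellation of the endpoint boundary term relies on the fact that the isometry $f$ preserves the Berwald (equivalently Chern) connection \emph{together with} the transformed reference vector $T\mapsto df_p(T)$; making this $f$-equivariance of $D^T_UU$ at both endpoints precise is the delicate part. By contrast, the curvature estimate is immediate once $V=JT$, the relations (\ref{TV}), and Theorem \ref{holo} are in hand.
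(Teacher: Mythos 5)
Your proposal is correct and follows essentially the same route as the paper: minimal displacement geodesic, the no-corner identity $df_p(T(0))=T(l)$, holomorphy giving $df_p(V(0))=V(l)$, the variation $\exp_{\gamma(t)}(sV(t))$ with the boundary term cancelled by $f$-equivariance, and a negative second variation via $\mathbf{K}(T,JT)=\mathbf{H}(T_o)>0$. The only deviation is how you obtain $df_p(T(0))=T(l)$ — first variation plus the Finsler fundamental inequality, where the paper instead uses the triangle inequality to see that $\gamma\cup f(\gamma)$ is an unbroken geodesic at $f(p)$ — and both arguments are valid (your $\mathbf{H}(T_o)$ in the final integral is in fact the correct argument of $\mathbf{H}$; the paper's $\mathbf{H}(V_o)$ there is a typo).
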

\begin{proof} Let $d(\cdot,\cdot)$ be the distance function of $G$. Assume that there exists such a holomorphic isometry $f: M\to M$ with no fixed point.  Then
the continuous function $d(x, f(x))$ can attain its positive minimum, saying
$$d(p,f(p))=\min d(x,f(x))=l>0.$$ Let $\gamma$ be a minimal geodesic joining
$p$ to $f(p)$ with $\gamma(0)=p$ and $\gamma(l)=f(p)$.

Since $f$ is an isometry and $\gamma$ is a minimal geodesic, we have
\begin{align*}
l\leq d(\gamma(t), f(\gamma(t)))&\leq d(\gamma(t), f(p))+d(f(p), f(\gamma(t))).\\
&= d(\gamma(t), f(p))+d(p, \gamma(t))=d(p,f(p))=l.
\end{align*}
Thus $\gamma\cup f(\gamma)$ is a geodesic which is smooth at $f(p)$ and hence $df(\dot \gamma(0))=\dot\gamma(l)$.

Again, set $T=\dot\gamma$ and $V=JT$. Since $f$ is holomorphic, we have $df(V)=J(df(T))$ and hence

\begin{equation} V(l)=J(T(l))=J(df(T(0)))=df(J(T(0)))=df(V(0)).\end{equation}
 Consider the variation $\tilde \gamma(t,s)=\exp_{\gamma(t)}(sV(t))$.
Denoting $\tilde \gamma_s$ the variational field, we have $\tilde \gamma_s(t,0)=V(t)$. Since $f$ is an isometry, $f(\tilde \gamma(0,s))$ is also a geodesic with the initial velocity
\begin{equation}df(\tilde \gamma_s(0,0))=df(V(0))=V(l)=\tilde \gamma_s(l,0).\end{equation}
It implies the geodesic $\tilde \gamma(l,s)$ is just $f(\tilde \gamma(0,s))$ and hence $\tilde \gamma_s(l,s)=df(\tilde \gamma_s(0,s))$.

Since $f$ is an isometry, together with $df(T(0))=T(l)$ and $df(\tilde \gamma_s(0,s))=\tilde \gamma_s(l,s)$, it must hold
\begin{equation}g_T(D^T_{\tilde\gamma_s}{\tilde\gamma_s},T)|_{t=0}^l=0.\end{equation}
Thus, the second variation of arc length (\ref{2ndvar}) becomes
\begin{equation}L''=I_\gamma(V,V)=-\int_0^l\mathbf{K}(T,V)dt=-\int_0^l\mathbf{H}(V_o)dt<0.\end{equation}
This contradicts to
\begin{equation}d(\tilde \gamma(0,s), \tilde \gamma(l,s))= d(\tilde \gamma(0,s), f(\tilde \gamma(0,s)))\geq l.\end{equation}
and $\gamma(t)=\tilde\gamma(t,0)$ is minimizing.
\end{proof}
\section{Bonnet-Myers theorem}
Following Ni-Zheng, we introduce the \textit{orthogonal Ricci curvature} for strongly convex weakly K\"ahler Finsler metrics as
\begin{equation} \mathbf{Ric}^\perp(y)=\mathbf{Ric}(y)- \mathbf{K}(y,Jy)=\mathbf{Ric}(y)- \mathbf{H}(y_o).\end{equation}

\begin{theorem} Let $(M,F)$ be a complete  strongly convex weakly K\"ahler Finsler metric of complex dimension $n$. Suppose $\mathbf{Ric}^\perp\geq (2n-2)\lambda>0$, then the diameter of $M$ is at most $\pi/\sqrt{\lambda}$.
\end{theorem}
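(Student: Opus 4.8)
The plan is to run the classical Bonnet--Myers second variation argument, now using the orthogonal directions singled out by the complex structure. Arguing by contradiction, I would suppose that $M$ contains a minimizing unit-speed geodesic $\gamma(t)$, $0\le t\le d$, of length $d>\pi/\sqrt{\lambda}$, set $T=\dot\gamma$ and $V=JT$, and aim to produce a fixed-endpoint variation of $\gamma$ whose second variation of arc length is strictly negative, contradicting minimality. The relations $g_T(T,V)=0$, $g_T(V,V)=1$ and $D^T_TV=0$ recorded in (\ref{TV}) are exactly what make $V$ a legitimate unit parallel field orthogonal to $T$, so it can serve as one vector of a parallel frame.

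The key structural step is to choose a $g_T$-orthonormal frame $\{E_1,\dots,E_{2n}\}$ along $\gamma$, parallel with respect to $D^T$, with $E_{2n}=T$ and $E_1=V$. Such a frame exists because parallel transport along $\gamma$ with reference vector $T$ preserves $g_T$, and because $V$ is already parallel by Lemma \ref{lemJ}; one completes $\{T,V\}$ to a parallel orthonormal frame by parallel-transporting an orthonormal basis of the $g_T$-orthogonal complement. Since the Ricci curvature $\mathbf{Ric}(T)=g^{ik}R_{ik}/G$ is a trace, and hence independent of the chosen orthonormal frame, we may write $\mathbf{Ric}(T)=\sum_{i=1}^{2n-1}\mathbf{K}(T,E_i)$. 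Using $\mathbf{K}(T,V)=\mathbf{K}(T,E_1)=\mathbf{H}(T_o)$ from Theorem \ref{holo}, the definition of the orthogonal Ricci curvature gives $\mathbf{Ric}^\perp(T)=\sum_{i=2}^{2n-1}\mathbf{K}(T,E_i)$, the sum of the $2n-2$ flag curvatures in directions orthogonal to both $T$ and $JT$.

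With the frame in place, I would test the index form against the $2n-2$ variation fields $W_i=f(t)E_i$ for $i=2,\dots,2n-1$, where $f(t)=\sin(\pi t/d)$ vanishes at both endpoints. Since each $E_i$ is parallel, $D^T_TW_i=f'(t)E_i$ and $g_T(W_i,W_i)=f(t)^2$, so summing the index forms yields
\begin{equation}
\sum_{i=2}^{2n-1}I_\gamma(W_i,W_i)=\int_0^d\Big[(2n-2)f'(t)^2-f(t)^2\,\mathbf{Ric}^\perp(T)\Big]dt\le (2n-2)\int_0^d\Big[f'(t)^2-\lambda f(t)^2\Big]dt.
\end{equation}
Evaluating the elementary integral with $d>\pi/\sqrt{\lambda}$ makes the right-hand side equal to $(2n-2)\frac{d}{2}\big(\pi^2/d^2-\lambda\big)<0$, so at least one $I_\gamma(W_i,W_i)<0$. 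Because $W_i$ is $g_T$-orthogonal to $T$ and vanishes at the endpoints, the boundary term in (\ref{2ndvar}) drops out and $L''=I_\gamma(W_i,W_i)<0$, contradicting that $\gamma$ minimizes length. Hence no minimizing geodesic exceeds length $\pi/\sqrt{\lambda}$, giving $\mathrm{diam}(M)\le\pi/\sqrt{\lambda}$.

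The main obstacle I anticipate is the second step: justifying that the weakly K\"ahler condition really lets $JT$ sit inside a parallel orthonormal frame. This hinges on Lemma \ref{lemJ}, which guarantees $\nabla_{T^{\mathcal{H}}}(JT)=0$ along the geodesic, together with the metric compatibility of $D^T$ along $\gamma$; without the parallelism of $V$ one could not cleanly separate the holomorphic flag curvature $\mathbf{H}$ from the remaining $2n-2$ orthogonal flag curvatures, and the hypothesis $\mathbf{Ric}^\perp\ge(2n-2)\lambda$ would not feed directly into the summed index form. Once that separation is secured, the remainder is the routine Bonnet--Myers computation already rehearsed in the proof of the Synge type theorem.
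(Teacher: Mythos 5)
Your proposal is correct and follows essentially the same route as the paper: a parallel $g_T$-orthonormal frame containing $T$ and $V=JT$ (justified by Lemma \ref{Jinvariant} and Lemma \ref{lemJ}), the identity $\mathbf{Ric}^\perp(T)=\sum\mathbf{K}(T,E_i)$ over the $2n-2$ directions orthogonal to both via Theorem \ref{holo}, and a summed index-form estimate with a sine test function. The only cosmetic difference is that you run a strict contradiction on a geodesic of length $d>\pi/\sqrt{\lambda}$ using $\sin(\pi t/d)$, whereas the paper takes $d=\pi/\sqrt{\lambda}$ with $\sin(\sqrt{\lambda}t)$ and concludes via existence of conjugate points.
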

\begin{proof} The proof is similar to the first part of Theorem 5.1.
Let  $\gamma$ be a unit-speed geodesic such that $\gamma(0)=p$ and $\gamma(d)=q$ where $d=\pi/\sqrt{\lambda}$ is the length of the geodesic. Set $T=\dot\gamma$ and $V=JT$. Pick a $g_T$-orthonormal frame $\{E_i\}$ at $q$ such that
$$E_{2n}=T|_q,\ \ \  E_{2n-1}=V|_q.$$
Let $\{E_i(t)\}$ be the parallel transportation of $E_i$ along $\gamma$, i.e. $D^T_TE_i=0$. By (\ref{TV}), we have $E_{2n}(t)=T(t)$ and $E_{2n-1}(t)=V(t)$. Moreover $E_i(t)(i\leq 2n-2)$ are $g_T$-orthogonal to $T$  and $V$.  Set
$$W_i= s_\lambda(t) E_i(t)=\sin(\sqrt{\lambda}t)E_i(t).$$
We have
\begin{align*}
\sum_{i=1}^{2n-2}I_\gamma(W_i,W_i)&=\sum_{i=1}^{2n-2}\int_0^d[g_T(D_T^TW_i,D_T^TW_i)-g_T(W_i,W_i)\mathbf{K}(T,W_i)]dt\\
&= \sum_{i=1}^{2n-2}\int_0^d[(s'_\lambda(t))^2-(s_\lambda(t))^2\mathbf{K}(T,E_i)]dt\\
&= \int_0^d[(2n-2)(s'_\lambda(t))^2-\mathbf{Ric}^\perp(T)(s_\lambda(t))^2]dt\\
&\leq (2n-2)\int_0^d[(s'_\lambda(t))^2-\lambda(s_\lambda(t))^2]dt=0.
\end{align*}
Then, there exists $i_0$ such that $I_\gamma(W_{i_0},W_{i_0})\leq 0$. A standard argument shows that $\gamma$ contains conjugate points. Therefore, $\mathrm{diam}(M)\leq d=\pi/\sqrt{\lambda}$.
\end{proof}

\section{Laplacian comparison}

\setcounter{equation}{0}

Let $(M,G)$ be a complete strongly convex weakly K\"ahler Finsler  manifold of complex dimension $n$.
We shall consider the  Laplacian of the distance function in this section.

For any smooth function $f$ on $M$, its \textit{gradient} is defined by
\begin{equation}\nabla f=\mathcal{L}^{-1}(df)\end{equation}
where $\mathcal{L}^{-1}$ is the inverse of Legendre transformation
\begin{equation}\mathcal{L}(X)=\left\{\begin{array}{cc}
                                        g_X(X,\cdot), & X\not=0, \\
                                        0, & X=0.
                                      \end{array}
\right.\end{equation}
Let $\mathcal{U}_f=\{x\in M|df(x)\not=0\}$. As $D^T$ introduced in \S5, we define a linear connection $D^{\nabla f}$ on $\mathcal{U}_f$ by setting
\begin{equation}D^{\nabla f}_{\frac{\partial}{\partial x^j}}\frac{\partial}{\partial x^i}=\hat{\mathbb{G}}^k_{ij}(\nabla f)\frac{\partial}{\partial x^k}.\end{equation}
The \textit{Hessian} of $f$ can be defined as (cf. \cite{WX})
\begin{equation}H(f)(X,Y)=XY(f)-D^{\nabla f}_XY(f)=\left(\frac{\partial^2f}{\partial x^i\partial x^j}-\hat{\mathbb{G}}^k_{ij}(\nabla f)\frac{\partial f}{\partial x^k}\right)X^iY^j.\end{equation}
It should be noted that the connection $D^{\nabla f}$  defined here is different from that
in \cite{WX}. In that case the Chern connection is used. Fortunately,  the Hessian defined here is the same as that in \cite{WX}. In fact,
the difference between the real Berwald connection and real Chern connection is the Landsberg curvature $L^i_{jk}$, and hence $L^k_{ij}(\nabla f)\frac{\partial f}{\partial x^k}=0$. Moreover, the following identity is also true:
\begin{equation}H(f)(X,Y)=g_{\nabla f}(D^{\nabla f}_X\nabla f,Y),\ \ \ \forall X,Y\in T_{\mathbb{R}}M|_{\mathcal{U}_f}.\end{equation}
The $g_{\nabla f}$-trace of $H(f)$ can be considered as some second order differential operator acting on $f$. Let us denote the trace by
\begin{equation}\Box f:=\mathrm{tr}_{g_{\nabla f}}H(f)=\sum_{i=1}^{2n}H(f)(E_i,E_i)\end{equation}
where $\{E_i\}$ is a local $g_{\nabla f}$-orthonormal frame  on $\mathcal{U}_f$.\\

 Denote  $r(x)=d(p,x)$ be  the distance function from a fixed point $p\in M$. It is well known that $r$ is smooth on $M\backslash\{p\}$ away from the cut points of $p$, and $G(\nabla r)=1$  (cf. \cite{Shen1} or \cite{WX}).
Following Ni and Zheng (\cite{NZ}), we define the $\Box^\perp$-operator as
\begin{equation}\Box^\perp r:=\Box r-H(r)(\nabla r,\nabla r)-H(r)(J(\nabla r),J(\nabla r)).\end{equation}
Denote $\nabla r=T$ and $J(\nabla r)=V$ for abbreviation.  We shall estimate $\Box r$.

Assume $\gamma$ is a unit-speed geodesic
without a conjugate point up to distance $r$ from $p$ and $\gamma(r)=q$. Then $\dot\gamma=T|_\gamma$. Thus
\begin{equation}H(r)(T,T)=g_T(D^T_TT,T)=0,\ \ H(r)(T,V)=g_T(D^T_TT,V)=0.\end{equation}
Let $\{E_i\}$  be a $g_{\nabla r}$-orthonormal frame at $q$ such that
$$E_{2n}=T|_q,\ \ \  E_{2n-1}=JT|_q=V|_q.$$
Let $\{J_i\}$ be   the Jacobi fields along $\gamma$ with
$$J_i(0)=0,J_i(r)=E_i.$$
According to (4.1) in \cite{WX},  we have
$$H(r)(E_i,E_i)|_q=I_\gamma(J_i,J_i)=I_\gamma(J_i^\perp,J_i^\perp)$$
where $J_i^\perp=J_i-g_T(T,J_i)$.

Let $E_i(t)$ be the parallel transportation of $E_i$ along $\gamma$. Set
\begin{equation}s_\lambda(t)=\left\{\begin{array}{lc}
                                      \sin(\sqrt{\lambda}t), & \lambda>0, \\
                                      t, & \lambda=0, \\
                                      \sinh(\sqrt{-\lambda}t), & \lambda<0.
                                    \end{array}
\right.\end{equation} Produce
$$W_i=\frac{s_\lambda(t)}{s_\lambda(r)}E_i(t).$$
Since $J_i(t)$ and $W_i(t)$ have the same boundary value, by the index lemma, we have
\begin{align*}
\Box^\perp r|_q&=\sum_{i=1}^{2n-2}H(r)(E_i,E_i)|_q=\sum_{i=1}^{2n-2}I_\gamma(J_i,J_i)\leq \sum_{i=1}^{2n-2}I_\gamma(W_i,W_i)\\
&=\frac{1}{(s_\lambda(r))^2}\int_0^r\Big((2n-2)(s'_\lambda(t))^2-\mathbf{Ric}^\perp(T)(s_\lambda(t))^2\Big)dt.
\end{align*}
By setting
\begin{equation}ct_{\lambda}(t)=\left\{\begin{array}{lc}
                                        \sqrt{\lambda}\cot (\sqrt{\lambda}t), & \lambda>0, \\
                                        1/t, & \lambda=0,\\
                                         \sqrt{-\lambda}\coth (\sqrt{-\lambda}t) & \lambda<0, \\
                                      \end{array}
\right.\end{equation}
one can immediately obtain the following theorem.
\begin{theorem}Let $(M,G)$ be a complete strongly convex weakly K\"ahler Finsler manifold of complex dimension $n$. If $\mathbf{Ric}^\perp\geq (2n-2)\lambda$, then we have the following inequality whenever $r$ is smooth $$\Box^\perp r\leq (2n-2)ct_\lambda (r).$$
\end{theorem}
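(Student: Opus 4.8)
The plan is to localize the estimate along a minimizing geodesic and express $\Box^\perp r$ as a partial trace of the index form, following the setup already assembled above. Fix a point $q$ at which $r$ is smooth and let $\gamma$ be the unit-speed minimizing geodesic from $p$ to $q=\gamma(r)$, so that $\dot\gamma=T|_\gamma$ with $T=\nabla r$ and $V=JT$; smoothness of $r$ at $q$ guarantees that $\gamma$ carries no conjugate points on $[0,r]$. First I would unwind the definition: since $\Box r=\sum_{i=1}^{2n}H(r)(E_i,E_i)$ for a $g_{\nabla r}$-orthonormal frame $\{E_i\}$ with $E_{2n}=T|_q$ and $E_{2n-1}=V|_q$, the two subtracted terms in $\Box^\perp r$ are exactly the $T$- and $V$-contributions, so $\Box^\perp r|_q=\sum_{i=1}^{2n-2}H(r)(E_i,E_i)|_q$, a trace over the $2n-2$ directions orthogonal to the holomorphic plane spanned by $T$ and $V$. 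Here I rely on $V=JT$ being a legitimate unit frame vector along $\gamma$: by Lemma \ref{Jinvariant} it is $g_T$-orthogonal to $T$ and of unit length, and by Lemma \ref{lemJ} it is parallel, $D^T_TV=0$.

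Next I would convert each diagonal Hessian term into an index form. Letting $J_i$ be the Jacobi field along $\gamma$ with $J_i(0)=0$ and $J_i(r)=E_i$, the identity (4.1) of \cite{WX} gives $H(r)(E_i,E_i)|_q=I_\gamma(J_i,J_i)$, and the index lemma allows me to replace each $J_i$ by the explicit competitor $W_i=\frac{s_\lambda(t)}{s_\lambda(r)}E_i(t)$, where $E_i(t)$ is the parallel extension of $E_i$; since $W_i$ and $J_i$ share boundary data, $I_\gamma(J_i,J_i)\leq I_\gamma(W_i,W_i)$. Because $E_i(t)$ remains parallel and $g_T$-orthonormal, substituting into the index form yields $I_\gamma(W_i,W_i)=s_\lambda(r)^{-2}\int_0^r\big[s'_\lambda(t)^2-s_\lambda(t)^2\,\mathbf{K}(T,E_i)\big]\,dt$, the flag curvature being unchanged when the transverse edge is rescaled.

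The decisive step is to sum over $i=1,\dots,2n-2$ and recognize the orthogonal Ricci curvature. Using the definition of $\mathbf{Ric}(T)$ as the sum of flag curvatures over the $2n-1$ directions orthogonal to $T$, together with Theorem \ref{holo} which gives $\mathbf{K}(T,V)=\mathbf{H}(T_o)$, one obtains $\sum_{i=1}^{2n-2}\mathbf{K}(T,E_i)=\mathbf{Ric}(T)-\mathbf{K}(T,V)=\mathbf{Ric}^\perp(T)$. This is the one place where the weakly K\"ahler hypothesis genuinely enters, and I expect it to be the main point to get right: it needs both the parallelism of $V=JT$ from Lemma \ref{lemJ} (so that the holomorphic flag direction is cleanly excluded from the transverse sum) and the coincidence of holomorphic and flag curvature. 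Feeding in the bound $\mathbf{Ric}^\perp\geq(2n-2)\lambda$ then gives $\Box^\perp r|_q\leq (2n-2)\,s_\lambda(r)^{-2}\int_0^r\big[s'_\lambda(t)^2-\lambda\,s_\lambda(t)^2\big]\,dt$.

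Finally I would evaluate the integral using that $s_\lambda$ solves the Jacobi equation $s''_\lambda=-\lambda s_\lambda$ in all three sign regimes. Integrating by parts and using $s_\lambda(0)=0$ collapses the integrand to a boundary term, $\int_0^r\big[s'_\lambda(t)^2-\lambda\,s_\lambda(t)^2\big]\,dt=s_\lambda(r)s'_\lambda(r)$, whence $\Box^\perp r|_q\leq (2n-2)\,s'_\lambda(r)/s_\lambda(r)=(2n-2)\,ct_\lambda(r)$, which matches the definition of $ct_\lambda$ case by case. The remaining work is purely bookkeeping: checking that $W_i$ is admissible at both endpoints and that the rescaling invariance of the flag curvature is invoked correctly.
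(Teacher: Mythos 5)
Your proposal is correct and follows essentially the same route as the paper's own proof: the same orthonormal frame with $E_{2n}=T$, $E_{2n-1}=JT$, the identity $H(r)(E_i,E_i)=I_\gamma(J_i,J_i)$ from \cite{WX}, the index-lemma comparison with the test fields $W_i=\frac{s_\lambda(t)}{s_\lambda(r)}E_i(t)$, and the summation recognizing $\mathbf{Ric}^\perp(T)$ via Theorem \ref{holo} and the parallelism of $JT$ from Lemma \ref{lemJ}. You merely make explicit two steps the paper leaves implicit (the final integration by parts yielding $s_\lambda(r)s'_\lambda(r)$, and the precise role of Lemma \ref{lemJ} in keeping the holomorphic direction excluded along the whole geodesic), both of which are handled correctly.
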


Now we shall estimate  $H(r)(V,V)$. In \cite{YZ}, the complex Hessian of $r$ is introduced. Let us  define
\begin{equation} r_{11}:=T_o^\alpha T_o^\beta \frac{\partial^2 r}{\partial z^\alpha\partial z^\beta}-2\mathbb{G}^\alpha(T_o) \frac{\partial r}{\partial z^\alpha}\end{equation}
where
$$T_o=\frac{1}{2}(T-\sqrt{-1}JT)=T_o^\alpha\frac{\partial}{\partial z^\alpha}.$$
We point out that $r_{11}$ has the same meaning in \cite{YZ} (cf. Theorem 4.4 therein).

\begin{lemma}Assume $G$ is  strongly convex and weakly K\"ahler. It holds
\begin{equation}H(r)(V,V)=-4r_{11}\end{equation}
\end{lemma}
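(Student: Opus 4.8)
The plan is to compute both sides of the identity $H(r)(V,V) = -4r_{11}$ in local coordinates and verify they agree. The key observation is that the right-hand side $r_{11}$ is expressed in complex coordinates via $T_o$, while the left-hand side $H(r)(V,V)$ is naturally a real object; so the main task is a careful translation between the real and complex pictures using the dictionary established in \S2 and the weakly K\"ahler relation \eqref{5eq002}.

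First I would write out $H(r)(V,V)$ explicitly. By definition,
\begin{equation*}
H(r)(V,V) = \left(\frac{\partial^2 r}{\partial x^i \partial x^j} - \hat{\mathbb{G}}^k_{ij}(\nabla r)\frac{\partial r}{\partial x^k}\right) V^i V^j,
\end{equation*}
where $V = JT = J(\nabla r)$, so $V^i = J^i_k T^k$ with $T = \nabla r$. The plan is to substitute $V^i = J^i_k T^k$ and regroup, keeping in mind that along the geodesic the reference vector for the connection coefficients is exactly $T = \nabla r$. Next I would express $T_o = \tfrac12(T - \sqrt{-1}JT)$ in components, so that $T_o^\alpha = \tfrac12(T^\alpha - \sqrt{-1}(JT)^\alpha)$, and expand the right-hand side $-4r_{11} = -4\big(T_o^\alpha T_o^\beta \partial_\alpha\partial_\beta r - 2\mathbb{G}^\alpha(T_o)\partial_\alpha r\big)$ using the chain rule relating $\partial/\partial z^\alpha$ to $\partial/\partial x^i$. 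Because $r$ is real-valued, its complex second derivatives $\partial_\alpha\partial_\beta r$ break into real second derivatives in $x$, and the factor $T_o^\alpha T_o^\beta$ will produce exactly the combination $J^i_a T^a J^j_b T^b = V^i V^j$ on the Hessian-of-$r$ part after taking real parts; the imaginary contributions should cancel because $r$ is real. This matches the pure second-derivative term of $H(r)(V,V)$.

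The heart of the matter is matching the connection terms: I must show that the spray contribution $-4 \cdot (-2)\mathbb{G}^\alpha(T_o)\partial_\alpha r = 8\,\mathrm{Re}\big(\mathbb{G}^\alpha(T_o)\partial_\alpha r\big)$ reproduces $-\hat{\mathbb{G}}^k_{ij}(\nabla r)\tfrac{\partial r}{\partial x^k} V^i V^j$. Here is where the weakly K\"ahler hypothesis enters decisively. The relation \eqref{5eq002}, $\mathbb{G}^\beta = \hat{\mathbb{G}}^\beta + \sqrt{-1}\hat{\mathbb{G}}^{\beta+n}$, converts the complex spray $\mathbb{G}^\alpha(T_o)$ into the real spray coefficients evaluated at $T$. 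Moreover I expect to need the homogeneity/parallelism identities \eqref{5eq006}--\eqref{parallel0}, namely $\hat{\mathbb{G}}^i_k u^k = 2J^i_k \hat{\mathbb{G}}^k$, to convert the quadratic expression $\hat{\mathbb{G}}^k_{ij}V^iV^j = \hat{\mathbb{G}}^k_{ij}J^i_a J^j_b T^a T^b$ into the spray itself via the second-order version \eqref{parallel00}. Concretely, contracting \eqref{parallel00} against $y^s$ (with $y = T$) and using $\hat{\mathbb{G}}^i_{sk}T^s = 2\hat{\mathbb{G}}^i_k$ (a consequence of $(2)$-homogeneity of the spray) should let me rewrite $\hat{\mathbb{G}}^i_{jk}u^ju^k$ in terms of $\hat{\mathbb{G}}^i$, which is precisely the bridge to $\mathbb{G}^\alpha$.

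The main obstacle, and the step I would budget the most care for, is the bookkeeping of the real-versus-imaginary parts together with the factor-of-two conventions running through $\partial_\alpha = \tfrac12(\partial_{x^\alpha} - \sqrt{-1}\partial_{x^{\alpha+n}})$, the relation $G_{ik} = 2g_{ik}$, and the factor $\tfrac12$ in \eqref{3eq050}. Since $r$ is real, a clean strategy is to verify the identity only after extracting real parts, discarding imaginary terms at each stage by invoking reality of $r$ and Lemma~\ref{Lem01}(iv); this avoids tracking spurious imaginary contributions. I expect that once the spray conversion via \eqref{5eq002} and the contraction \eqref{parallel00} are in place, the two sides match coefficient by coefficient, and the overall constant $-4$ emerges from the combination of the $\tfrac12$'s in $T_o$ (squared, giving $\tfrac14$) against the $G_{ik}=2g_{ik}$ and spray normalizations.
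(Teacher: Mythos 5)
Your overall skeleton coincides with the paper's own proof: expand both sides in the real coordinates, convert the complex spray through the weakly K\"ahler relation \eqref{5eq002}, and use the parallelism identities \eqref{parallel0}--\eqref{parallel00} to turn the contractions $T^iV^j\hat{\mathbb{G}}^k_{ij}(T)$ and $V^iV^j\hat{\mathbb{G}}^k_{ij}(T)$ back into spray terms. However, two of your structural claims are incorrect, and one of them conceals the ingredient that actually closes the proof.

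First, the second-order term does not produce ``exactly $V^iV^j$'' after taking real parts, and its imaginary part does not vanish ``because $r$ is real.'' The quantity $T_o^\alpha T_o^\beta\,\partial^2 r/\partial z^\alpha\partial z^\beta$ is of type $(2,0)$, so reality of $r$ gives no cancellation at all; what the expansion actually yields is
\[
4\,T_o^\alpha T_o^\beta \frac{\partial^2 r}{\partial z^\alpha\partial z^\beta}
=\left(T^iT^j-V^iV^j\right)\frac{\partial^2 r}{\partial x^i\partial x^j}
-2\sqrt{-1}\,V^iT^j\frac{\partial^2 r}{\partial x^i\partial x^j},
\]
so that, after the spray terms are incorporated, $4r_{11}$ equals $H(r)(T,T)-H(r)(V,V)-2\sqrt{-1}\,H(r)(T,V)$ plus connection contractions. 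To arrive at $-H(r)(V,V)$ one must use that the integral curves of $\nabla r$ are unit-speed geodesics, i.e. $H(r)(T,T)=g_T(D^T_TT,T)=0$ and $H(r)(T,V)=g_T(D^T_TT,V)=0$; these two facts kill the extra real term and, combined with the identity $T^iV^j\hat{\mathbb{G}}^k_{ij}(T)=2\hat{\mathbb{G}}^i(T)J^k_i$ coming from \eqref{parallel0}, the entire imaginary part. Your plan never invokes the geodesic property of $\nabla r$, and without it the computation cannot close: the remaining terms $H(r)(T,T)$ and $H(r)(T,V)$ are not zero for a general function, so no amount of bookkeeping will make them disappear.

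Second, your component dictionary carries a spurious factor of $\tfrac12$: by \eqref{2eq001}, the components of $T_o$ in the basis $\partial/\partial z^\alpha$ are $T_o^\alpha=T^\alpha+\sqrt{-1}\,T^{\alpha+n}=T^\alpha-\sqrt{-1}(JT)^\alpha$, with no $\tfrac12$; the $\tfrac12$ in the vector identity $T_o=\tfrac12(T-\sqrt{-1}JT)$ is absorbed when one rewrites $\partial/\partial x^i$ in terms of $\partial/\partial z^\alpha=\tfrac12(\partial/\partial x^\alpha-\sqrt{-1}\,\partial/\partial x^{\alpha+n})$. Consequently your accounting of the constant $-4$ (``the $\tfrac12$'s in $T_o$, squared'') is wrong; the factor $4$ in fact arises from the two $\tfrac12$'s hidden in $\partial^2/\partial z^\alpha\partial z^\beta$ relative to real second derivatives. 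This slip alone would be repairable by care, but together with the missing geodesic input above, the proposal as written does not yield the stated identity.
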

\begin{proof} Writting $T=T^i\partial/\partial x^i$ and $V=V^i\partial/\partial x^i$, it turns out
\begin{equation}T^\alpha=V^{\alpha+n},\ \ T^{\alpha+n}=-V^\alpha,\ \ T_o^\alpha=T^\alpha+\sqrt{-1}T^{\alpha+n}.\end{equation}
Recalling (\ref{5eq002}), we have
$$\mathbb{G}^{\alpha}(T_o) = \hat{\mathbb{G}}^{\alpha}(T) + \sqrt{-1}\hat{\mathbb{G}}^{\alpha+n}(T),$$
thus
\begin{align}
2\mathbb{G}^\alpha(T_o) \frac{\partial r}{\partial z^\alpha}&= (\hat{\mathbb{G}}^{\alpha}(T) + \sqrt{-1}\hat{\mathbb{G}}^{\alpha+n}(T))(\frac{\partial r}{\partial x^\alpha}-\sqrt{-1}\frac{\partial r}{\partial x^{\alpha+n}})\nonumber\\
&=\hat{\mathbb{G}}^{k}(T)\frac{\partial r}{\partial x^k}-\sqrt{-1}\hat{\mathbb{G}}^{i}(T)J^k_i\frac{\partial r}{\partial x^k}.
\end{align}
One can immediately  get
\begin{align*}
T_o^\alpha T_o^\beta &=(T^\alpha+\sqrt{-1}T^{\alpha+n})(T^\beta+\sqrt{-1}T^{\beta+n})\\
&=T^\alpha T^\beta-T^{\alpha+n}T^{\beta+n}+\sqrt{-1}(T^{\alpha+n}T^\beta+T^\alpha T^{\beta+n})
\end{align*}
and
\begin{align*}
4 \frac{\partial^2 r}{\partial z^\alpha\partial z^\beta}&= \frac{\partial^2 r}{\partial x^\alpha\partial x^\beta}-\frac{\partial^2 r}{\partial x^{\alpha+n}\partial x^{\beta+n}}-\sqrt{-1}\left(\frac{\partial^2 r}{\partial x^{\alpha+n}\partial x^\beta}+\frac{\partial^2 r}{\partial x^\alpha \partial x^{\beta+n}}\right).
\end{align*}
Thus
\begin{align}
4 T_o^\alpha T_o^\beta\frac{\partial^2 r}{\partial z^\alpha\partial z^\beta}&= T^iT^j \frac{\partial^2 r}{\partial x^i\partial x^j}-V^iV^j\frac{\partial^2 r}{\partial x^i\partial x^j}-2\sqrt{-1}V^iT^i\frac{\partial^2 r}{\partial x^i\partial x^j}.
\end{align}
Hence
\begin{align}\label{r11-1}
4r_{11}&=4 T_o^\alpha T_o^\beta\frac{\partial^2 r}{\partial z^\alpha\partial z^\beta}-8\mathbb{G}^\alpha(T_o) \frac{\partial r}{\partial z^\alpha}\nonumber\\
&=T^iT^j \frac{\partial^2 r}{\partial x^i\partial x^j}-V^iV^j\frac{\partial^2 r}{\partial x^i\partial x^j}-4\hat{\mathbb{G}}^{k}(T)\frac{\partial r}{\partial x^k}\nonumber\\
&\hskip2cm -2\sqrt{-1}\left(V^iT^i\frac{\partial^2 r}{\partial x^i\partial x^j}-2\hat{\mathbb{G}}^{i}(T)J^k_i\frac{\partial r}{\partial x^k}\right)\nonumber\\
&=H(r)(T,T)-H(r)(V,V)+ \left(T^iT^j\hat{\mathbb{G}}^{k}_{ij}(T) -V^iV^j\hat{\mathbb{G}}^{k}_{ij}(T) -4\hat{\mathbb{G}}^{k}(T)\right)\frac{\partial r}{\partial x^k}\nonumber\\
&-2\sqrt{-1}\left(H(r)(T,V)+ \left(T^iV^j\hat{\mathbb{G}}^{k}_{ij}(T)-2\hat{\mathbb{G}}^{i}(T)J^k_i\right)\frac{\partial r}{\partial x^k}\right).
\end{align}
By $T^i\hat{\mathbb{G}}^{k}_{ij}(T)=\hat{\mathbb{G}}^{k}_{j}(T)$  and (\ref{parallel0}), we have
\begin{equation}\label{r11-2}T^iV^j\hat{\mathbb{G}}^{k}_{ij}(T)=V^j\hat{\mathbb{G}}^{k}_{j}(T)=2\hat{\mathbb{G}}^{i}(T)J^k_i.\end{equation}
According to (\ref{parallel00}), we have
\begin{align}\label{r11-3}
V^iV^j\hat{\mathbb{G}}^{k}_{ij}(T)&=2J^k_i\hat{\mathbb{G}}^{i}_{j}(T)V^j-\hat{\mathbb{G}}^{k}_{i}(T)J^i_jV^j\nonumber\\
&=4J^k_i\hat{\mathbb{G}}^{j}(T)J_j^i+\hat{\mathbb{G}}^{k}_{i}(T)T^i\nonumber\\
&=-4\hat{\mathbb{G}}^{k}(T)+2\hat{\mathbb{G}}^{k}(T)=-2\hat{\mathbb{G}}^{k}(T)=-T^iT^j\hat{\mathbb{G}}^{k}_{ij}(T).
\end{align}
Substituting (\ref{r11-2}), (\ref{r11-3})  and  $H(r)(T,T)=H(r)(T,V)=0$ into (\ref{r11-1}), we reach $4r_{11}=-H(r)(V,V)$.
\end{proof}

\begin{theorem}Let $(M,G)$ be a complete strongly convex K\"ahler Finsler manifold of complex dimension $n$.
If $\mathbf{H}\geq 4\lambda$, then we have the following inequality whenever $r$ is smooth  $$H(r)(J(\nabla r),J(\nabla r))\leq 2 ct_\lambda(2r).$$
\end{theorem}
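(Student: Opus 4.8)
The plan is to bound $H(r)(V,V)$, with $V=J(\nabla r)$, through the index form along a minimizing geodesic---just as in the preceding index-form arguments---but comparing against a test field whose frequency is \emph{doubled}. Fix a point $q$ at which $r$ is smooth and let $\gamma:[0,r]\to M$ be the unit-speed minimal geodesic from $p$ with $\dot\gamma=T=\nabla r$ and $\gamma(r)=q$; by hypothesis $\gamma$ has no conjugate point up to $r$. By Lemma \ref{Jinvariant} and Lemma \ref{lemJ} (see (\ref{TV})), the field $V=JT$ is $g_T$-orthogonal to $T$, has unit $g_T$-norm, and is parallel along $\gamma$, i.e. $D^T_TV=0$. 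Since $V|_q=E_{2n-1}$ in the notation of this section, the identity $H(r)(E_i,E_i)|_q=I_\gamma(J_i,J_i)$ (cf. (4.1) in \cite{WX}) specializes to
\be
H(r)(V,V)=I_\gamma(J_V,J_V),
\ee
where $J_V$ is the Jacobi field with $J_V(0)=0$ and $J_V(r)=V|_q$.

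Next I would use the index lemma to replace $J_V$ by an explicit field. The essential point---where the complex structure enters---is that $\{T,V\}$ spans a single complex line, so by Theorem \ref{holo} the only curvature appearing is $\mathbf{K}(T,V)=\mathbf{H}\geq 4\lambda$. Because the effective lower bound is $4\lambda$ rather than $\lambda$, the sharp comparison function is $s_{4\lambda}$, i.e. the one of frequency $2\sqrt\lambda$. Taking
\be
W=\frac{s_{4\lambda}(t)}{s_{4\lambda}(r)}\,V(t),
\ee
which shares the boundary values of $J_V$, and using $D^T_TV=0$ so that $D^T_TW=\big(s_{4\lambda}'(t)/s_{4\lambda}(r)\big)V(t)$, the index lemma yields $H(r)(V,V)=I_\gamma(J_V,J_V)\le I_\gamma(W,W)$.

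It remains to evaluate the one-dimensional integral. With $g_T(V,V)=1$ and $\mathbf{K}(T,V)=\mathbf{H}\ge 4\lambda$,
\begin{align*}
I_\gamma(W,W)&=\frac{1}{s_{4\lambda}(r)^2}\int_0^r\Big[\big(s_{4\lambda}'(t)\big)^2-s_{4\lambda}(t)^2\,\mathbf{K}(T,V)\Big]\,dt\\
&\le \frac{1}{s_{4\lambda}(r)^2}\int_0^r\Big[\big(s_{4\lambda}'(t)\big)^2-4\lambda\,s_{4\lambda}(t)^2\Big]\,dt.
\end{align*}
Since $s_{4\lambda}''+4\lambda\,s_{4\lambda}=0$ and $s_{4\lambda}(0)=0$, integrating by parts collapses the last integral to $s_{4\lambda}(r)\,s_{4\lambda}'(r)$, so that $I_\gamma(W,W)\le s_{4\lambda}'(r)/s_{4\lambda}(r)=ct_{4\lambda}(r)$. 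The elementary identity $ct_{4\lambda}(r)=2\sqrt\lambda\cot(2\sqrt\lambda r)=2\,ct_\lambda(2r)$ (with the analogous computations when $\lambda=0$ or $\lambda<0$) then gives $H(r)(V,V)\le 2\,ct_\lambda(2r)$, as claimed.

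The computation is routine once the test field is fixed, so the only genuine obstacle is conceptual: to recognize that this is a \emph{single-direction} estimate governed by the full holomorphic sectional curvature $\mathbf{H}\ge 4\lambda$ along the complex line $\{T,V\}$, and that this is precisely what forces the doubled frequency $s_{4\lambda}$ (equivalently the doubled argument in $ct_\lambda(2r)$). One must also confirm that the three cases $\lambda>0$, $\lambda=0$, $\lambda<0$ are handled uniformly, which they are because $s_{4\lambda}''+4\lambda s_{4\lambda}=0$ holds in each case.
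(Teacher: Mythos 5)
Your proof is correct, and it takes a genuinely different route from the paper's. The paper argues through the complex Hessian: it first proves the identity $H(r)(J\nabla r,J\nabla r)=-4r_{11}$ (the lemma immediately preceding the theorem), then sets $f(t)=-r_{11}(\gamma(t))$ and imports from the proof of Theorem 4.4 in \cite{YZ} the Riccati inequality $4f^2+f'\leq -\frac14\mathbf{H}(T_o)$ together with the asymptotic $\lim_{t\to 0^+}tf(t)=\frac14$, after which ODE comparison gives $f\leq \frac12 ct_\lambda(2t)$. You instead stay entirely inside the real index-form framework already used for the $\Box^\perp$ estimate: the Wu--Xin identity $H(r)(V,V)=I_\gamma(J_V,J_V)$, the index lemma against the doubled-frequency field $s_{4\lambda}(t)V(t)/s_{4\lambda}(r)$ (legitimate because $V=JT$ is $g_T$-unit, $g_T$-orthogonal to $T$, and parallel along $\gamma$ by (\ref{TV})), and Theorem \ref{holo} to convert $\mathbf{H}\geq 4\lambda$ into $\mathbf{K}(T,V)\geq 4\lambda$; the identity $ct_{4\lambda}(r)=2\,ct_\lambda(2r)$ then finishes, uniformly in the sign of $\lambda$. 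The comparison is instructive: your route unifies the two theorems of this section (same scheme, with $s_\lambda$ replaced by $s_{4\lambda}$) and uses only the weakly K\"ahler hypothesis, since (\ref{TV}), Theorem \ref{holo}, and the Wu--Xin machinery all hold at that level of generality, so you have in fact proved a formally stronger statement, whereas the paper's proof genuinely needs the K\"ahler condition, which enters through Yin--Zhang's asymptotics for $r_{11}$. What your route forgoes is the by-product identity $H(r)(JT,JT)=-4r_{11}$, which ties the real Hessian estimate to the complex Hessian comparison theory of \cite{YZ} and is of independent interest. One point you share with the paper's own proof of the $\Box^\perp$ theorem but should make explicit: for $\lambda>0$ the test field requires $s_{4\lambda}(r)\neq 0$, which does hold because the absence of conjugate points on $[0,r]$ forces $r<\pi/(2\sqrt{\lambda})$ (otherwise the field $s_{4\lambda}(t)V(t)$, truncated at $\pi/(2\sqrt{\lambda})$ and extended by zero, would have nonpositive index).
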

\begin{proof}
Define $f(t)=-r_{11}(\gamma(t))$ along the radial geodesic $\gamma$. According to the proof of Theorem 4.4 in \cite{YZ}, it holds
\begin{equation}4f^2(t)+f'(t)\leq -\frac{1}{4}\mathbf{H}(T_o)\end{equation}
and
\begin{equation}\lim_{t\to 0^+}tf(t)=\frac{1}{4}\end{equation}
since $G$ is a K\"ahler Finsler metric. By the assumption  $\mathbf{H}\geq 4\lambda$, one can get
\begin{equation}f(t)\leq \frac{1}{2}ct_{\lambda}(2t).\end{equation}
Therefore, $H(f)(V,V)|_{\gamma(t)}=4f\leq 2 ct_{\lambda}(2t)$.
\end{proof}

\begin{corollary}Let $(M,G)$ be a complete strongly convex K\"ahler Finsler manifold of complex dimension $n$. If $\mathbf{Ric}^\perp\geq (2n-2)\lambda$ and $\mathbf{H}\geq 4\lambda$, then we have the following inequality whenever $r$ is smooth
\begin{equation}\label{boxestimate}\Box r\leq (2n-2)ct_\lambda (r)+2ct_\lambda (2r).\end{equation}
\end{corollary}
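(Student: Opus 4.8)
The plan is to read this corollary as a pure assembly of the two comparison estimates just established, via the definition of $\Box^\perp$. First I would start from the defining relation
\[
\Box^\perp r = \Box r - H(r)(\nabla r, \nabla r) - H(r)(J(\nabla r), J(\nabla r)),
\]
and solve it for $\Box r$ to obtain
\[
\Box r = \Box^\perp r + H(r)(\nabla r, \nabla r) + H(r)(J(\nabla r), J(\nabla r)),
\]
so that $\Box r$ is displayed as a sum of three contributions, each of which has already been bounded (or shown to vanish) earlier in this section.

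Next I would dispose of the middle term. Along the radial geodesic $\gamma$ one has $\nabla r = T = \dot\gamma$, and since $T$ is the velocity of a unit-speed geodesic, $D^T_T T = 0$; therefore
\[
H(r)(\nabla r, \nabla r) = H(r)(T,T) = g_T(D^T_T T, T) = 0,
\]
precisely the identity already recorded just before the first theorem of this section.

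Finally I would invoke the two preceding theorems, whose hypotheses are both assumed here. The bound $\mathbf{Ric}^\perp \geq (2n-2)\lambda$ yields $\Box^\perp r \leq (2n-2)\, ct_\lambda(r)$, while $\mathbf{H} \geq 4\lambda$ yields $H(r)(J(\nabla r), J(\nabla r)) \leq 2\, ct_\lambda(2r)$; both hold on the common locus where $r$ is smooth. Substituting these three facts into the expression for $\Box r$ gives
\[
\Box r \leq (2n-2)\, ct_\lambda(r) + 0 + 2\, ct_\lambda(2r),
\]
which is exactly (\ref{boxestimate}). Because all the genuine work — the Jacobi-field and index-form comparison producing the $\Box^\perp r$ estimate, and the Riccati-type argument for $r_{11}$ producing the $H(r)(V,V)$ estimate — was carried out in the two theorems, there is no real obstacle at this stage; the one point deserving a word of care is simply that the two curvature hypotheses must be imposed simultaneously and that both estimates are valid on the same smooth part of the distance function, which is the case.
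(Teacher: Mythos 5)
Your proposal is correct and is exactly the argument the paper intends: the corollary is stated without proof precisely because it follows immediately by solving the definition of $\Box^\perp r$ for $\Box r$, using $H(r)(T,T)=g_T(D^T_TT,T)=0$ along the radial geodesic, and adding the bounds from the two preceding theorems (the K\"ahler hypothesis of the corollary implies the weakly K\"ahler hypothesis of the $\Box^\perp$ estimate, so both apply simultaneously where $r$ is smooth). Nothing further is needed.
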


Using the above Laplacian comparison, one can  follow the arguments in Section 5 of \cite{YZ} and obtain the following volume comparison and eigenvalue comparison.

\begin{corollary}Let $(M,G)$ be a complete strongly convex K\"ahler Finsler $n$-manifold  with arbitrary measure $\mu$. Assume $\mathbf{Ric}^\perp\geq (2n-2)K$ and $\mathbf{H}\geq 4K$ where $K$ is either $+1,0$ or $-1$. If the Shen curvature vanishes, then for $0\leq r\leq R$ it holds
$$\frac{\mathrm{Vol}^\mu_G(B_p(R))}{\mathrm{Vol}^\mu_G(B_p(r))}\leq \frac{V_K(R)}{V_K(r)}$$
where $B_p(r)$ is the geodesic ball centered at $p$ with radius $r$, and $V_K(r)$ is the volume of the geodesic ball of radius $r$ in the complex space form.
\end{corollary}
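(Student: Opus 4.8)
The plan is to run a Bishop--Gromov type argument that upgrades the pointwise Laplacian comparison (\ref{boxestimate}) into a monotonicity statement for the volume density in geodesic polar coordinates. First I would fix $p$ and introduce polar coordinates about $p$: for a unit vector $\theta$ in the indicatrix $S_pM$ put $\gamma_\theta(t)=\exp_p(t\theta)$ and let $c(\theta)\in(0,\infty]$ be the cut value in the direction $\theta$. Inside the cut locus the measure takes the form $d\mu=\sigma_p(t,\theta)\,dt\,d\theta$ for a positive density $\sigma_p$, and $r(\gamma_\theta(t))=t$. The whole estimate reduces to proving that, for each fixed $\theta$, the ratio $t\mapsto \sigma_p(t,\theta)/\bar\sigma_K(t)$ is non-increasing on $(0,c(\theta))$, where $\bar\sigma_K$ denotes the radial density of the complex space form.

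The decisive step is the identity $\partial_t\log\sigma_p(t,\theta)=\Box r(\gamma_\theta(t))$, valid wherever $r$ is smooth. In the Finsler setting the radial logarithmic derivative of the $\mu$-density equals the $\mu$-weighted Laplacian $\Delta_\mu r$, which differs from the trace operator $\Box r=\mathrm{tr}_{g_{\nabla r}}H(r)$ by the Shen ($S$-) curvature evaluated on the reference vector $\nabla r$. It is precisely the hypothesis of vanishing Shen curvature that removes this weighted correction and gives $\Delta_\mu r=\Box r$. Granting this, applying (\ref{boxestimate}) with $\lambda=K$ yields $\partial_t\log\sigma_p(t,\theta)\le (2n-2)\,ct_K(t)+2\,ct_K(2t)$.

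Next I would identify the right-hand side as a logarithmic derivative of the model density. In the complex space form of constant holomorphic curvature $4K$ a radial Jacobi field splits into the single $J(\nabla r)$-direction, where the sectional curvature is $4K$ and the Jacobi field grows like $s_K(2t)$, together with the $(2n-2)$ orthogonal directions, where the curvature is $K$ and the growth is $s_K(t)$. Hence, up to an irrelevant constant, $\bar\sigma_K(t)=(s_K(t))^{2n-2}\,s_K(2t)$, and since $ct_\lambda=s'_\lambda/s_\lambda$ a direct computation gives $\partial_t\log\bar\sigma_K(t)=(2n-2)\,ct_K(t)+2\,ct_K(2t)$, matching the bound exactly. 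Therefore $\partial_t\log\big(\sigma_p(t,\theta)/\bar\sigma_K(t)\big)\le 0$, so the density ratio is non-increasing as claimed.

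Finally, the volume inequality follows by integration over $\theta$. Setting $\sigma_p(t,\theta)=0$ for $t\ge c(\theta)$ (the cut locus being a $\mu$-null set), the monotonicity of the ratio persists uniformly in $\theta$, and the elementary lemma that the quotient of the integrals of two functions with non-increasing pointwise ratio is itself non-increasing yields $\mathrm{Vol}^\mu_G(B_p(R))/\mathrm{Vol}^\mu_G(B_p(r))\le V_K(R)/V_K(r)$ for $0\le r\le R$, where $V_K(r)=\int_0^r\bar\sigma_K(t)\,dt$ up to the constant from integrating over directions. The main obstacle is the first identity: pinning down exactly how the arbitrary measure $\mu$ enters the radial derivative of the density, and verifying that vanishing Shen curvature cancels the weighted term so that $\Box r$ — rather than a modified Laplacian — governs the growth of $\sigma_p$. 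The secondary, routine point is the standard handling of the cut locus, which lets the smooth comparison (\ref{boxestimate}) suffice on all of $M$.
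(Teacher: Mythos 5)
Your proposal is correct and is essentially the argument the paper invokes: the paper proves this corollary by citing the Laplacian comparison (\ref{boxestimate}) together with the arguments of Section 5 of Yin--Zhang, which is precisely the Bishop--Gromov scheme you spell out (radial density identity $\partial_t\log\sigma_p=\Delta_\mu r$, cancellation of the weighted term by vanishing Shen curvature so that $\Delta_\mu r=\Box r$, matching the bound with the model density $(s_K(t))^{2n-2}s_K(2t)$, and the standard ratio-monotonicity and cut-locus handling). No gaps; your write-up simply makes explicit what the paper leaves to the reference.
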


\begin{corollary}
Let $(M,G,\mu)$ be a strongly convex K\"ahler Finsler $n$-manifold with
vanishing Shen curvature.  Assume $\mathbf{Ric}^\perp\geq (2n-2)K$ and $\mathbf{H}\geq 4K$ where $K$ is either $+1,0$ or $-1$. Then the first Dirichlet eigenvalue of the geodesic ball of radius $r$ centered at $p$ is bounded above by
$$\lambda_1(B_p(r))\leq \lambda_1(B(r,K))$$  where $\lambda_1(B(r,K))$ is the first Dirichlet eigenvalue of the geodesic ball of radius $r$ on
the complex space form.
\end{corollary}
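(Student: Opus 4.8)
The plan is to transplant Cheng's eigenvalue comparison theorem into the Finsler setting, feeding on the Laplacian estimate (\ref{boxestimate}) and the volume comparison of the preceding corollary as the only geometric inputs, exactly as in Section 5 of \cite{YZ}. The model is the complex space form of complex dimension $n$ and constant holomorphic curvature $4K$, whose geodesic sphere of radius $s$ has area density $A_K(s)$ with logarithmic derivative $A_K'(s)/A_K(s)=(2n-2)ct_K(s)+2ct_K(2s)$, precisely the right-hand side of (\ref{boxestimate}). On $B(r,K)$ its first Dirichlet eigenfunction $\phi$ is radial, positive in the interior, non-increasing with $\phi'\leq 0$, vanishing at $s=r$, and solving
\[
\phi''(s)+\big((2n-2)ct_K(s)+2ct_K(2s)\big)\phi'(s)+\lambda_1(B(r,K))\,\phi(s)=0 .
\]

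First I would build a radial test function $u=\phi\circ r$ on $B_p(r)$, where $r(x)=d(p,x)$. Since $\phi(r)=0$ on $\partial B_p(r)$ and $u$ is Lipschitz, it is admissible for the Dirichlet problem, and the variational characterization gives
\[
\lambda_1(B_p(r))\leq\frac{\int_{B_p(r)}F^{*2}(du)\,d\mu}{\int_{B_p(r)}u^2\,d\mu},\qquad du=\phi'(r)\,dr .
\]
Provided the gradient term collapses to $F^{*2}(du)=(\phi'(r))^2$ (see the obstacle below), the coarea formula turns this into the one–dimensional quotient
\[
\lambda_1(B_p(r))\leq\frac{\int_0^r(\phi'(s))^2A(s)\,ds}{\int_0^r(\phi(s))^2A(s)\,ds},\qquad A(s)=\mathrm{Vol}^\mu_G(\partial B_p(s)).
\]

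Next I would exploit the density monotonicity. Because the Shen curvature vanishes, the estimate (\ref{boxestimate}) controls the measure-Laplacian of $r$, so $A'(s)/A(s)\leq A_K'(s)/A_K(s)$ and hence $A(s)/A_K(s)$ is non-increasing on $(0,r]$. A Chebyshev-type rearrangement inequality, combining this monotonicity with the sign pattern of $\phi$ and $\phi'$ prescribed by the model ODE, then yields
\[
\frac{\int_0^r(\phi')^2A}{\int_0^r\phi^2A}\leq\frac{\int_0^r(\phi')^2A_K}{\int_0^r\phi^2A_K}=\lambda_1(B(r,K)),
\]
which is the asserted bound. The non-smoothness of $r$ on the cut locus of $p$ is handled exactly as in the volume comparison corollary: since the cut locus is $\mu$-null and $u$ is Lipschitz, the monotonicity of $A/A_K$ is read in the barrier sense, and the rearrangement is then purely one-dimensional.

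I expect the genuine obstacle to be the step $F^{*2}(du)=(\phi'(r))^2$. Although $F^*(dr)=1$ for the forward distance, the non-reversibility of $F$ means that $F^{*2}(\phi'\,dr)=(\phi')^2\,\overleftarrow F^{*2}(dr)$ when $\phi'\leq 0$, so the energy integrand is governed by the \emph{reverse} norm rather than by $(\phi')^2$. Reconciling this with the requirement that the test function vanish on $\partial B_p(r)$ forces one to track forward versus reverse distance spheres and to run the whole argument consistently for the reverse metric $\overleftarrow F(v)=F(-v)$, for which the associated distance from $p$ has unit dual norm and the energy does collapse to $(\phi')^2$. Because the model is Riemannian, $\lambda_1(B(r,K))$ is intrinsic, so the remaining task is to verify that the hypotheses $\mathbf{Ric}^\perp\geq(2n-2)K$, $\mathbf{H}\geq 4K$ and the vanishing of the Shen curvature persist for $\overleftarrow F$, whence (\ref{boxestimate}) and the density monotonicity survive and the comparison closes.
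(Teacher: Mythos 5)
Your proposal is correct and is essentially the paper's own proof: the paper's argument for this corollary consists precisely of invoking the Laplacian comparison (\ref{boxestimate}) and following the Cheng-type scheme of Section 5 of \cite{YZ}, which is exactly the test-function/coarea/density-monotonicity argument you reconstruct (and your model density, with logarithmic derivative $(2n-2)ct_K(s)+2ct_K(2s)$, is the right one). The only remark worth making is that the ``genuine obstacle'' you flag is vacuous here: by Definition 2.1(iii) a complex Finsler metric satisfies $F(\zeta v)=|\zeta|F(v)$ for all $\zeta\in\mathbb{C}$, so taking $\zeta=-1$ and noting $(-y)_o=-y_o$ shows the associated real Finsler metric is reversible, i.e. $\overleftarrow{F}=F$; hence $F^{*}(-dr)=F^{*}(dr)=1$, the energy integrand collapses to $(\phi'(r))^2$ without modification, and your proposed detour through the reverse metric (and the re-verification of the curvature hypotheses for it) is unnecessary---or, if you prefer, trivially satisfied---so the proof closes as written.
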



\noindent Bin Chen\\
School of Mathematical Sciences, Tongji University\\
Shanghai, China, 200092\\
chenbin@tongji.edu.cn\\

\noindent Nan Li\\
School of Mathematical Sciences, Tongji University\\
Shanghai, China, 200092\\
2130901@tongji.edu.cn\\

\noindent Siwei Liu\\
School of Mathematical Sciences, Tongji University\\
Shanghai, China, 200092\\
carrot98@163.com


\begin{thebibliography}{99}


\bibitem{MG} M. Abate, G. Patrizio, \textit{Finlser metrics -- a global approach
 with applications to geometric function theory}, Lecture Notes in Math., vol. 1591, Springer-Verlag, Berlin, 1994.


\bibitem{Au} L. Auslander, \textit{ON curvature in Finsler geometry}, Trans. AMS 79(1955): 378-388.

\bibitem{BCS}D. Bao, S. -S. Chern, Z.Shen, \textit{An introduction to Riemann-Finsler geometry},
Grad. Texts in Math. 200, Springer, New York, 2000.


\bibitem{CB} B. Chen, Y. Shen, \textit{K\"ahler Finsler metrics are actually strongly K\"ahler}, Chin. Ann. Math. Ser. B 30 (2) (2009):  173-178.


\bibitem{CS} S. Chern, Z. Shen, \textit{Riemann-Finsler geometry}, Nabkai Tracts in Mathematics, vol. 6, World Scientific Publishing, 2005.


\bibitem{LQ} J. Li, C. Qiu,  \textit{Comparison and Wu’s theorems in Finsler geometry}, Math. Z., (2020) 295:485–514.

\bibitem{LQZ} H. Li, C. Qiu, G. Zhong,  \textit{Curvatures of Strongly Convex K\"ahler Finsler Manifolds}, preprint.


\bibitem{Liu}G. Liu, \textit{Three-circle theorem and dimension estimate for holomorphic functions on K\"ahler manifolds}.
Duke Math. J. 165(15) (2016): 2899–2919.


\bibitem{NZ} L. Ni, F. Zheng, \textit{Comparison and vanishing theorems for Kähler manifolds}, Calc. Var., (2018) 57:151.




\bibitem{Oh}S. Ohta, \textit{Comparison Finsler geometry}, Springer Monographs in Mathematics, (2021), Springer, Switzerland.


\bibitem{Shen1}Z. Shen, \textit{Volume comparison and its applications in Riemann–Finsler geometry}. Adv. Math. (1997) 128:
306–328.

\bibitem{Shen2}Z. Shen, \textit{Lectures on Finsler geometry}. World Science, Singapore (2001).


\bibitem{Tsu}Y. Tsukamoto, \textit{On K\"ahlerian manifolds with positive holomorphic sectional curvature}. Proc. Japan Acad.
 (1957) 33: 333–335.


\bibitem{Won} D. Won, \textit{On the Synge's Theorem for complex Finsler manifolds}, Bull. Korean Math. Soc., 41(1)(2004): 137-145.


\bibitem{WX}B. Wu, Y. Xin \textit{Comparison theorems in Finsler geometry
and their applications}, Math. Ann., (2007) 337:177-196.


\bibitem{XZ} H. Xia, C. Zhong, \textit{On strongly convex weakly
K\"ahler-Finsler metrics of constant flag curvature}, J.Math. Anal. Appl. 443(2016): 891-912.


\bibitem{XZQ} J. Xiao,  T. Zhong, C. Qiu, \textit{Laplacian on complex Finsler manifolds}. Chin. Ann. Math. B 32(4), (2011):
507–520.


\bibitem{ZZ}C. Zhong, T. Zhong, \textit{ Horizontal $\bar\partial$-Laplacian on complex Finsler manifolds}. Sci. China Ser. A
48, (2005):377–391.


\bibitem{YZ} S. Yin, X. Zhang, \textit{Comparison Theorems and Their Applications on
K\"ahler Finsler Manifolds}, J. Geom. Anal., (2017) DOI 10.1007/s12220-017-9950-9


\end{thebibliography}
\end{document}